\documentclass[reqno,12pt,]{amsart}
\usepackage{bbm}
\usepackage{amsmath}
\usepackage{txfonts}
\usepackage{stmaryrd}
\usepackage{amssymb}
\usepackage{amsfonts}
\usepackage{times}
\usepackage{mathrsfs}
\usepackage{amsthm}
\usepackage{enumerate}
\usepackage{framed}
\usepackage{lipsum}
\usepackage{color}

\newtheorem{theorem}{Theorem}

\newtheorem{corollary}{Corollary}

\newtheorem{lemma}{Lemma}

\def\PD{\partial\mathbb{D}}
\def\H{\mathcal{H}}
\def\DD{\mathbb{D}}

\def\rt{\right}

\def\lf{\left}

\def\tmu{\tilde{\mu}_{\omega}}
\def\hmu{\hat{\mu}_{\omega}}
\def\A{A_{\mu,\nu}}
\def\mv{\mu_{\nu}^{\omega}}
\def\vv{\hat{\mu}_{\nu}}
\def\M{\mathcal{M}}
\def\HP{\mathcal{H}^{p}}
\def\dsup{\displaystyle\sup}

\begin{document}
\title[  ]{Embedding theorem for weighted Hardy spaces into Lebesgue spaces}
\author{Zengjian Lou \ \ \  Conghui Shen$^\ast$  }

\address{Zengjian Lou\\ Department of Mathematics, Shantou University \\
 Shantou Guangdong 515063, People's Republic of China.}
\email{zjlou@stu.edu.cn }

\address{Conghui Shen\\ Department of Mathematics, Shantou University \\
 Shantou Guangdong 515063, People's Republic of China. }\email{shenconghui2008@163.com }

\subjclass[2000]{47B38, 30H10, 32A35}
\begin{abstract} In this paper, we consider the weighted Hardy space $\H^p(\omega)$ induced by an $A_1$ weight $\omega.$
We characterize the positive Borel measure $\mu$  such that the identical operator maps $\H^p(\omega)$  into $L^q(d\mu)$ boundedly when $0<p, q<\infty.$
As an application, we obtain necessary and sufficient conditions for the boundedness of generalized area operators $\A$ from $\H^p(\omega)$ to $L^q(\omega).$
\thanks{$\ast$ Corresponding author.}
\thanks{The research was supported by the NNSF of China (Nos.11571217, 11720101003, 11871293)  and  Key Projects of Fundamental Research in Universities of Guangdong Province (No.2018KZDXM034).}
\vskip 3mm \noindent{\it Keywords}: weighted Hardy space; embedding;  area operators; Carleson measure.
\end{abstract}
 \maketitle

\section{Introduction}
As usual, we denote the unit disk and the unit circle by $\DD$ and $\PD,$ respectively.
For an arc $I\subseteq\PD,$ denote the normalized Lebesgue measure of $I$ by $|I|=\frac{1}{2\pi}\int_{I}|d\xi|.$
Let $\mathcal{H}(\DD)$ be the set of all analytic functions on $\DD$. For $0<p<\infty,$ the classical Hardy space
$\HP$ consists of all analytic functions $f\in\mathcal{H}(\DD)$ satisfying
\[\|f\|^p_{\HP}:=\dsup_{0<r<1}\frac{1}{2\pi}\int_{\PD}|f(r\xi)|^p|d\xi|<\infty.\]
For $p=\infty,$ we say that $f\in \mathcal{H}^{\infty}$ if $f$ is a bounded analytic function. More about the theory of
Hardy spaces, we refer the readers to \cite{{Du},{Ga},{Zhu}}.
Let $\omega$ be a non-negative function on $\PD$ and $1\leq p<\infty,$ we say that $\omega$ satisfies the $A_p$
condition of Muckenhoupt, denoted by $\omega\in A_P,$ if there exists a positive constant $C$ such that for any arcs $I\subseteq\PD,$
\begin{align*}
\lf(\frac{1}{|I|}\int_{I}\omega(\xi)|d\xi|\rt)
\lf(\frac{1}{|I|}\int_{I}\omega(\xi)^{-1/(p-1)}|d\xi|\rt)^{p-1}\leq C
\end{align*}
or
\begin{align*}
\frac{1}{|I|}\int_{I}\omega(\xi)|d\xi|\leq C \inf_{\xi\in I}\omega(\xi),
\end{align*}
whenever $1<p<\infty$ or $p=1,$ respectively.
As we known,  the Muckenhoupt $A_p$ weight $\omega$ satisfies the doubling property \cite{Gr,To},
that is, there exists a positive constant $C$ such that for all arcs $I\subseteq\PD,$
\begin{align}\label{eq2.1}
\int_{2I}\omega(\xi)|d\xi|\leq C\int_{I}\omega(\xi)|d\xi|.
\end{align}
For $\omega\in A_1,$  $0<p<\infty,$ the weighted Lebesgue space $L^{p}(\omega)$ consists of all complex-valued Lebesgue measurable functions
$f$ on $\PD$ for which
\[\|f\|_{L^{p}(\omega)}:=\lf(\frac{1}{2\pi}\int_{\PD}|f(\xi)|^{p}\omega(\xi)|d\xi|\rt)^{\frac{1}{p}}<\infty.\]
It is well known that for each function $f\in\mathcal{H}^{1},$ the non-tangential limit of $f$ exists almost everywhere, in other words, $f(e^{i\theta})$ is well defined for any $\theta$ except that a zero measure set.
Furthermore, $f(e^{i\theta})\in L^{1}(\PD),$ see {\cite[Theorem 2.2]{Du}}. So we may define the weighted Hardy space
$\mathcal{H}^{P}(\omega)$ as follows:
\[\mathcal{H}^{P}(\omega):=\lf\{f\in\mathcal{H}^{1}:f\in L^{p}(\omega)\rt\}\]
equipped with the norm $\|f\|_{\H^{p}(\omega)}=\|f\|_{L^{p}(\omega)}.$
When $\omega\equiv1,$ $\H^p(\omega)$ coincides with the classical Hardy space $\H^p.$

In 1962, Carleson characterized a positive Borel measure $\mu$ on $\DD$ such that $\H^{P}\subseteq L^{p}({d\mu})$ continuously.
Indeed, he proved that if $1<p<\infty,$
then there exists a positive constant $C$ such that for each $f\in \H^p,$
\[\int_{\DD}|f(z)|^pd\mu(z)\leq C\|f\|^p_{\H^p}\]
holds if and only if $\mu$ is a Carleson measure, see \cite{Ca}.
Duren generalized Carleson's result in \cite{Dur} and proved that
if $0<p<q<\infty,$  then there is a positive constant $C$ such that for all $f\in\H^p,$
\[\int_{\DD}|f(z)|^qd\mu(z)\leq C\|f\|^q_{\H^p}\]
holds if and only if $\mu$ is a $\frac{q}{p}$-Carleson measure.

In \cite{Gi}, Girela, Lorente and Sarri\'{o}n obtained some necessary conditions and some sufficient conditions for positive Borel measure $\mu$ so that the differentiation operator from $\H^p(\omega)$  to $L^p(d\mu)$ is bounded.
In \cite{Lu}, Luecking characterized the positive Borel measures $\mu$ such that differential operator of order $m$ maps $\H^p$ into $L^q(d\mu)$ boundedly.
It is worth mentioning that Mcphail obtained a description of those positive Borel measures $\mu$ on $\DD$ so that the identical mapping
from $\H^p(\omega)$ to $L^p(d\mu)$ is bounded in \cite{Mc}, which also extended Carleson's theorem.

Motivated by above results, we give a characterization of the positive Borel measure $\mu$ such that
the identical operator maps $\H^p(\omega)$ into $L^q(d\mu)$ boundedly when $0<p, q<\infty$ and $\omega\in A_1.$

\begin{theorem}\label{a}
Let $\mu$ be a positive Borel measure on $\DD$ and $\omega$ be a non-negative measurable function on $\PD.$
If $\omega\in A_1$ and $0<p\leq q<\infty,$ then there exists a positive constant $C$ such that for all $f\in\H^p(\omega),$
\begin{align}\label{eq2.2}
\|f\|_{L^q(d\mu)}\leq C\|f\|_{\H^p(\omega)}
\end{align}
holds if and only if there exists a positive constant $C$ such that for all $I\subseteq\PD,$
\begin{align}\label{eq2.3}
\mu(S(I))\leq C\lf(\int_{I}\omega(\xi)|d\xi|\rt)^{\frac{q}{p}}
\end{align}
holds.
\end{theorem}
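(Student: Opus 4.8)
The plan is to prove the two implications separately, with the Carleson box $S(I)$ serving as the bridge between the boundary weight and the interior measure; I expect the sufficiency direction to be the main work.

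For necessity I would test \eqref{eq2.2} against functions concentrated over each box. Given an arc $I\subseteq\PD$ with centre $\xi_I$, set $a=(1-|I|)\xi_I$ and
\[
f_I(z)=\left(\frac{1-|a|^2}{(1-\bar a z)^2}\right)^{\gamma},
\]
with $\gamma$ a large exponent to be chosen. Since $\mathrm{Re}(1-\bar a z)>0$ on $\DD$, the function $f_I$ is bounded and analytic, so $f_I\in\H^1$. On $S(I)$ one has $|1-\bar a z|\approx|I|\approx 1-|a|^2$, hence $|f_I|\approx|I|^{-\gamma}$ there and $\int_\DD|f_I|^q\,d\mu\gtrsim|I|^{-\gamma q}\mu(S(I))$. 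To bound the norm I would split $\PD$ into the dyadic arcs $2^kI$ centred at $\xi_I$: on $2^kI\setminus 2^{k-1}I$ one has $|1-\bar a\xi|\approx 2^k|I|$, while the doubling property \eqref{eq2.1} gives $\int_{2^kI}\omega\le C^k\int_I\omega$. Summing the resulting geometric series, which converges once $2^{2\gamma p}>C$, yields $\|f_I\|_{\H^p(\omega)}^p\lesssim|I|^{-\gamma p}\int_I\omega$. Inserting both estimates into \eqref{eq2.2} cancels the factor $|I|^{-\gamma}$ and leaves exactly \eqref{eq2.3}.

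For sufficiency I would pass to the non-tangential maximal function $Nf(\xi)=\dsup_{z\in\Gamma(\xi)}|f(z)|$, taken over a cone $\Gamma(\xi)$ with vertex $\xi$, and argue on level sets. With $E_\lambda=\{z\in\DD:|f(z)|>\lambda\}$ and $O_\lambda=\{\xi\in\PD:Nf(\xi)>\lambda\}$, each $z\in E_\lambda$ forces the whole arc of vertices $\xi$ whose cone contains $z$ to lie in the open set $O_\lambda$; decomposing $O_\lambda=\bigcup_jI_j$ into components and using \eqref{eq2.1} to absorb the aperture-dependent dilation of the boxes shows $E_\lambda\subseteq\bigcup_jS(CI_j)$. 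Hypothesis \eqref{eq2.3} then gives $\mu(E_\lambda)\le C\sum_j(\int_{I_j}\omega)^{q/p}$. When $p=q$ the sum collapses to $\int_{O_\lambda}\omega$ by disjointness; when $p<q$ I factor out $(\int_{I_j}\omega)^{q/p-1}\le(\int_{O_\lambda}\omega)^{q/p-1}$ and collapse the remaining sum, so in either case $\mu(E_\lambda)\lesssim(\int_{O_\lambda}\omega)^{q/p}$. Writing $A(\lambda)=\int_{O_\lambda}\omega$ and using that it is non-increasing (so that $A(\lambda)\lesssim\lambda^{-p}\int_\PD(Nf)^p\omega$), the elementary inequality $\int_0^\infty\lambda^{q-1}A(\lambda)^{q/p}\,d\lambda\lesssim(\int_0^\infty\lambda^{p-1}A(\lambda)\,d\lambda)^{q/p}$, which follows from exactly this monotonicity, converts $\int_\DD|f|^q\,d\mu=q\int_0^\infty\lambda^{q-1}\mu(E_\lambda)\,d\lambda$ into
\[
\int_\DD|f|^q\,d\mu\lesssim\left(\int_\PD(Nf)^p\omega\right)^{q/p}.
\]

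It then remains to control $Nf$ by the boundary data $f^{\ast}$. Here I would use that $|f|^s$ is subharmonic for every $s>0$, so that with the choice $0<s<\min(p,1)$ one has $|f^{\ast}|^s\in L^1(\PD)$, $|f(z)|^s\le P[|f^{\ast}|^s](z)$, and therefore $Nf\lesssim M(|f^{\ast}|^s)^{1/s}$, where $M$ is the Hardy--Littlewood maximal operator. Since $p/s>1$ and $\omega\in A_1\subseteq A_{p/s}$, the operator $M$ is bounded on $L^{p/s}(\omega)$, which yields $\int_\PD(Nf)^p\omega\lesssim\int_\PD|f^{\ast}|^p\omega=\|f\|_{\H^p(\omega)}^p$ and closes the argument. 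The hardest point is this last weighted maximal estimate in the regime $p\le 1$; the device of taking a sub-unit exponent $s<p$ and transferring to the super-unit exponent $p/s$ is what makes it hold for the full range $0<p<\infty$ using only $\omega\in A_1$.
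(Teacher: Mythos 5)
Your proposal is correct, and while your necessity argument is essentially the paper's (both test \eqref{eq2.2} on a large power of the kernel $\frac{1}{1-\bar a z}$ adapted to $I$, decompose $\PD$ into the dyadic arcs $2^k I$, and sum a geometric series using the doubling property \eqref{eq2.1}; your extra factor $(1-|a|^2)^{\gamma}$ is only a normalization), your sufficiency argument takes a genuinely different route. The paper never introduces the non-tangential maximal function: it works with the weighted maximal function $\M_{\omega}$, uses the pointwise bound $|f(z)|\preceq \M_{\omega}f(z)$ for analytic $f$ (its Lemma 2), proves the weak-type estimate $\mu\lf(\{\M_{\omega}f>s\}\rt)\preceq s^{-q/p}\|f\|_{\H^1(\omega)}^{q/p}$ by a covering lemma applied to the sets $A^{\varepsilon}_s$ together with hypothesis \eqref{eq2.3}, and then upgrades weak type to strong type by a Marcinkiewicz-style truncation $|f|^{1/\alpha}=g_{1/\alpha,s}+\chi_{1/\alpha,s}$ combined with the layer-cake formula and Minkowski's integral inequality. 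You instead run the classical Carleson--Duren level-set scheme in the weighted setting: the components $I_j$ of the open set $O_{\lambda}=\{Nf>\lambda\}$ trap the super-level set $E_{\lambda}$ of $|f|$ inside dilated boxes $S(CI_j)$, hypothesis \eqref{eq2.3}, doubling, and the embedding $\ell^1\hookrightarrow\ell^{q/p}$ (here $q/p\geq 1$ is used) give $\mu(E_{\lambda})\preceq \omega(O_{\lambda})^{q/p}$, and the distributional inequality (valid because $\omega(O_{\lambda})\leq\lambda^{-p}\|Nf\|^p_{L^p(\omega)}$ by monotonicity) converts this into $\int_{\DD}|f|^q d\mu\preceq\|Nf\|^q_{L^p(\omega)}$; finally you control $\|Nf\|_{L^p(\omega)}$ for the full range $0<p<\infty$ via the subordination $|f(z)|^s\leq P[|f^{\ast}|^s](z)$ with $0<s<\min(p,1)$ (legitimate since $f\in\H^1$ by the definition of $\H^p(\omega)$), the bound $N(P[g])\preceq Mg$, and Muckenhoupt's theorem for $M$ on $L^{p/s}(\omega)$ with $A_1\subseteq A_{p/s}$. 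Each route has its advantages: yours reuses classical unweighted machinery and isolates the single genuinely weighted ingredient (the maximal theorem) in the last step, making transparent why $A_1$ suffices even for $p\leq 1$; the paper's route is self-contained at the level of $\M_{\omega}$ (no recourse to Muckenhoupt's $L^r(\omega)$ theorem or to the factorization/subharmonicity trick), and, because the whole argument is phrased in terms of $\M_{\omega}$, it yields Corollary 1 (the characterization of when $\M_{\omega}:L^p(\omega)\rightarrow L^q(\mu)$ is bounded) as an immediate by-product, which your argument would not give directly.
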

By the proof of Theorem \ref{a}, we can obtain the following corollary easily.
\begin{corollary}\label{coro1}
Let $\mu$ be a positive Borel measure on $\DD$ and $\omega$ be a non-negative measurable function on $\PD.$
If $\omega\in A_1$ and $0<p\leq q<\infty,$  then
\[\M_{\omega}: L^P(\omega)\rightarrow L^q(\mu)\]
is bounded if and only if $\M_{\omega,\frac{q}{p}}(\mu)\in L^{\infty},$ where $\M_{\omega}$ is defined in Section 2.
\end{corollary}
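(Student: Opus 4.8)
The plan is to deduce Corollary~\ref{coro1} by reusing the machinery behind Theorem~\ref{a}, exploiting the fact that the only place analyticity enters that argument is a pointwise majorization of $f$ by $\M_{\omega}$. First I would record the definitional identity: by the definition of the fractional maximal function in Section~2, the statement $\M_{\omega,\frac{q}{p}}(\mu)\in L^{\infty}$ is nothing but
\[
\dsup_{I\subseteq\PD}\frac{\mu(S(I))}{\lf(\int_{I}\omega(\xi)|d\xi|\rt)^{q/p}}<\infty,
\]
i.e. exactly condition \eqref{eq2.3}. Thus Theorem~\ref{a} already yields the equivalence between $\M_{\omega,\frac{q}{p}}(\mu)\in L^{\infty}$ and the embedding $\H^p(\omega)\hookrightarrow L^q(d\mu)$; the content of the corollary is to phrase the same dichotomy at the level of the maximal operator $\M_{\omega}\colon L^p(\omega)\to L^q(\mu)$, where analyticity is dropped. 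I will use the two structural facts furnished by the construction of $\M_{\omega}$ in Section~2: that $|f(z)|\le C\,\M_{\omega}(f^{\ast})(z)$ for every $f\in\H^1$ with boundary values $f^{\ast}$, and that $\M_{\omega}$ is governed by a Hardy--Littlewood-type boundary maximal operator which, since $\omega\in A_1\subseteq A_p$, is bounded on $L^p(\omega)$.

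For the sufficiency direction ($\M_{\omega,\frac{q}{p}}(\mu)\in L^{\infty}\Rightarrow\M_{\omega}$ bounded) I would rerun the distribution-function argument of Theorem~\ref{a} for an arbitrary $g\in L^p(\omega)$ in place of a boundary value. Fixing $\lambda>0$, the level set $\{z\in\DD:\M_{\omega}g(z)>\lambda\}$ is contained in $\bigcup_j S(I_j)$, where the disjoint arcs $\{I_j\}$ are the components of the corresponding boundary superlevel set; applying \eqref{eq2.3} to each $S(I_j)$, using $q/p\ge1$ to sum via $\sum_j a_j^{q/p}\le(\sum_j a_j)^{q/p}$, together with the disjointness of the $I_j$, gives
\[
\mu\lf(\{\M_{\omega}g>\lambda\}\rt)\le C\lf(\int_{\cup_j I_j}\omega(\xi)|d\xi|\rt)^{q/p}.
\]
Integrating in $\lambda$ and invoking the elementary estimate $\int_0^{\infty}\lambda^{q-1}h(\lambda)^{q/p}\,d\lambda\le C\lf(\int_0^{\infty}\lambda^{p-1}h(\lambda)\,d\lambda\rt)^{q/p}$ for decreasing $h$ (valid because $q/p\ge1$, proved by bounding $\lambda^p h(\lambda)\le\int_0^{\infty}p t^{p-1}h(t)\,dt$), with $h(\lambda)$ the $\omega$-measure of the boundary superlevel set, reduces the claim to the $L^p(\omega)$-boundedness of the boundary maximal operator, which holds since $\omega\in A_1$. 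This yields $\|\M_{\omega}g\|_{L^q(\mu)}\le C\|g\|_{L^p(\omega)}$.

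For the necessity direction I would test $\M_{\omega}$ on indicator functions. Given an arc $I$, the function $g=\chi_I$ satisfies $\|g\|_{L^p(\omega)}^p=\int_I\omega(\xi)|d\xi|$ (up to normalization), while the construction of $\M_{\omega}$ forces $\M_{\omega}\chi_I(z)\ge c>0$ for every $z\in S(I)$, the relevant average of $\chi_I$ over $I$ being bounded below. Hence
\[
\mu(S(I))\le c^{-q}\int_{S(I)}(\M_{\omega}\chi_I)^q\,d\mu\le c^{-q}\|\M_{\omega}\chi_I\|_{L^q(\mu)}^q\le C\lf(\int_I\omega(\xi)|d\xi|\rt)^{q/p},
\]
which is \eqref{eq2.3}, i.e. $\M_{\omega,\frac{q}{p}}(\mu)\in L^{\infty}$. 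The main obstacle in this program is the geometric covering step, namely showing that a superlevel set of the disk maximal function $\M_{\omega}$ is swallowed by the union of Carleson boxes over the components of the corresponding boundary superlevel set; this, together with the lower bound $\M_{\omega}\chi_I\gtrsim1$ on $S(I)$, hinges on the precise definition of $\M_{\omega}$ from Section~2, and matching apertures and dilations is where the care is needed. Everything else---the summation inequality, the distribution-function lemma, and the weighted maximal bound---is routine once $q/p\ge1$ and $\omega\in A_1$ are in hand.
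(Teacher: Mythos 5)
Your proposal is sound, and it proves the corollary along a route that differs in both directions from what the paper intends. The paper's own proof is implicit in the proof of Theorem \ref{a}: for sufficiency one observes that the weak-type estimate (\ref{eq2.9}) and the subsequent truncation/distribution-function step never use analyticity of $f$, only its boundary values, so that same self-contained Marcinkiewicz-type argument already gives $\|\M_{\omega}g\|_{L^q(\mu)}\preceq\|g\|_{L^p(\omega)}$; for necessity one combines Lemma \ref{lmm 4} ($|f(z)|\leq C\M_{\omega}f(z)$ for analytic $f$) with the assumed boundedness of $\M_{\omega}$ to recover the embedding (\ref{eq2.2}), and then quotes the first half of Theorem \ref{a}, whose test functions are the analytic kernels $(1-\bar{a}z)^{-N}$. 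You instead prove necessity by testing on $\chi_I$ --- more elementary, and it bypasses analyticity and Lemma \ref{lmm 4} altogether; note that the lower bound $\M_{\omega}\chi_I\geq c$ on $S(I)$ does require the doubling property (\ref{eq2.1}) (for $z\in S(I)$ one has $I_z\subseteq 2I$, and the $\omega$-average of $\chi_I$ over $2I$ is $\omega(I)/\omega(2I)\geq 1/M$), which you correctly flag as the point needing care. For sufficiency, your covering step is valid: if $\M_{\omega}g(z)>\lambda$ then some arc $I\supseteq I_z$ has $\omega$-average of $|g|$ exceeding $\lambda$, hence $I$ lies in a single component $I_j$ of the boundary superlevel set, and $I_z\subseteq I_j$ forces $z\in S(I_j)$; the summation step using $q/p\geq 1$ and your decreasing-function inequality are both correct. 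The trade-off: the paper's route is self-contained (it reuses the weak-type bound it has already established), while yours is modular, outsourcing the hard analysis to the strong $L^p(\omega)$-boundedness of the boundary maximal operator.

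Two caveats. First, the operator you invoke is the maximal operator formed with $\omega$-averages, and its boundedness on $L^p(\omega)$ is not literally the Muckenhoupt $A_p$ theorem (which concerns the unweighted maximal function acting on weighted $L^p$); the correct justification is a Vitali covering argument plus the doubling of $\omega|d\xi|$, giving the weak $(1,1)$ inequality with respect to $\omega$, followed by Marcinkiewicz interpolation --- valid precisely for $p>1$. Second, as a consequence your sufficiency proof, like the paper's own, only works for $p>1$; this is a defect of the corollary's stated range $0<p\leq q<\infty$ rather than of your argument (for $p<1$ there exist $g\in L^p(\omega)$ that are $\omega$-integrable over no arc, so $\M_{\omega}g\equiv\infty$, and at $p=1$ only the weak-type conclusion survives), and indeed the paper only ever applies Corollary \ref{coro1} with source exponent $q'>1$.
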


Define the positive Borel measures $\tilde{\mu}_{\omega}$ and $\hat{\mu}_{\omega}$ by
\[\tilde{\mu}_{\omega}(\xi):=\frac{1}{2\pi\omega(\xi)}\int_{\DD}\frac{1-|z|^2}{|\xi-z|^2}d\mu(z)\]
and
\[\hat{\mu}_{\omega}(\xi):=\frac{1}{\omega(\xi)}\int_{\Gamma(\xi)}\frac{1}{1-|z|}d\mu(z),\]
where  $\Gamma(\xi)$ is defined by
\[\Gamma(\xi):=\lf\{z\in\DD:|\xi-z|<2(1-|z|)\rt\}.\]

\begin{theorem}\label{b}
Let $\mu$ be a positive Borel measure on $\DD$ and $\omega$ be a non-negative measurable function on $\PD.$
If $\omega\in A_1$ and $0<q<p<\infty,$ then the following conditions are equivalent:

{\rm (i)} $\|f\|_{L^q(d\mu)}\leq C\|f\|_{\H^p(\omega)}$ for some positive constant $C.$

{\rm (ii)} ~~$\tmu\in L^{\frac{p}{p-q}}(\omega).$

{\rm (iii)}~~$\hmu\in L^{\frac{p}{p-q}}(\omega).$
\end{theorem}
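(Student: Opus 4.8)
The plan is to establish the three equivalences through the static comparison (ii) $\Leftrightarrow$ (iii) and the dynamic equivalence (i) $\Leftrightarrow$ (iii); throughout write $s=\frac{p}{p-q}$, so that $s>1$ and $\frac1s+\frac qp=1$. The implication (ii) $\Rightarrow$ (iii) is the soft one: if $z\in\Gamma(\xi)$ then $|\xi-z|<2(1-|z|)$, whence $\frac1{1-|z|}\lesssim\frac{1-|z|^2}{|\xi-z|^2}$, and integrating over $\Gamma(\xi)\subseteq\DD$ gives the pointwise bound $\hmu(\xi)\lesssim\tmu(\xi)$, hence $\|\hmu\|_{L^s(\omega)}\lesssim\|\tmu\|_{L^s(\omega)}$. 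For the reverse (iii) $\Rightarrow$ (ii) I would split the Poisson mass according to $|\xi-z|\approx 2^{j}(1-|z|)$, on which the Poisson kernel is $\approx 2^{-2j}/(1-|z|)$, and dominate each piece by a fixed dilate of the cone integral to obtain $\tmu\lesssim M(\hmu\,\omega)/\omega$, where $M$ is the Hardy--Littlewood maximal operator. Since $\omega\in A_1$ forces $\omega^{1-s}\in A_s$, the boundedness of $M$ on $L^s(\omega^{1-s})$ then yields $\|\tmu\|_{L^s(\omega)}\lesssim\|\hmu\|_{L^s(\omega)}$.

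For sufficiency (iii) $\Rightarrow$ (i) I would work with the non-tangential maximal function $N(f)(\xi)=\dsup_{z\in\Gamma(\xi)}|f(z)|$, using the weighted estimate $\|N(f)\|_{L^p(\omega)}\lesssim\|f\|_{\H^p(\omega)}$, valid for $\omega\in A_1\subseteq A_\infty$ and every $0<p<\infty$. A Fubini argument over the cones, together with $|\{\xi:z\in\Gamma(\xi)\}|\approx 1-|z|$, gives
\[\int_\DD|f(z)|^q\,d\mu(z)\approx\int_\PD\lf(\int_{\Gamma(\xi)}\frac{|f(z)|^q}{1-|z|}\,d\mu(z)\rt)|d\xi|\le\int_\PD N(f)(\xi)^q\,\hmu(\xi)\,\omega(\xi)\,|d\xi|.\]
Splitting $\omega=\omega^{q/p}\omega^{1-q/p}$ and applying H\"older with exponents $\frac pq$ and $s$ bounds the right-hand side by $\|N(f)\|_{L^p(\omega)}^q\,\|\hmu\|_{L^s(\omega)}$, which is (i).

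The hard direction is necessity (i) $\Rightarrow$ (iii), which I would prove by a Luecking-type randomization. Fix a Whitney decomposition $\{Q_k\}$ of $\DD$ with centres $z_k$ and boundary shadows $I_k$ ($|I_k|\approx 1-|z_k|$), set $f_{z_k}(z)=\big(\frac{1-|z_k|^2}{(1-\bar z_k z)^2}\big)^{1/p}$, and test (i) on the randomized sums $F_\varepsilon=\sum_k\varepsilon_k c_k f_{z_k}$ with Rademacher signs $\varepsilon_k$ and coefficients $c_k\ge0$. Averaging over the signs and invoking Khinchine's inequality on both sides (with Jensen's inequality on the right, legitimate since $q/p<1$) turns (i) into
\[\int_\DD\lf(\sum_k c_k^2|f_{z_k}(z)|^2\rt)^{q/2}d\mu(z)\lesssim\lf(\int_\PD\lf(\sum_k c_k^2|f_{z_k}(\xi)|^2\rt)^{p/2}\omega(\xi)\,|d\xi|\rt)^{q/p}.\]
Since $|f_{z_k}(z)|\approx(1-|z_k|)^{-1/p}$ whenever $z\in S(I_k)$, the left-hand side dominates the tent sum $\sum_j\mu(Q_j)\big(\sum_{k:\,Q_j\subseteq S(I_k)}c_k^2(1-|z_k|)^{-2/p}\big)^{q/2}$, while $\|f_{z_k}\|_{\H^p(\omega)}^p=P[\omega](z_k)\approx\frac1{|I_k|}\int_{I_k}\omega$ governs the right-hand side. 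A tent-space (sequential Carleson) duality argument then converts the resulting inequality, valid for all $c_k\ge0$, into the finiteness of $\int_\PD\hmu^{\,s}\omega$, which is (iii).

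The main obstacle is concentrated in this last step, in two places. First, the integrated upper bound $\int_\PD(\sum_k c_k^2|f_{z_k}(\xi)|^2)^{p/2}\omega(\xi)|d\xi|\lesssim\sum_k c_k^p\|f_{z_k}\|_{\H^p(\omega)}^p$ for the boundary square function is immediate from subadditivity of $t\mapsto t^{p/2}$ when $p\le2$, but for $p>2$ the kernels $f_{z_k}$ overlap and this bound is false pointwise, so it must be recovered in integrated form by a finite-overlap/duality analysis across and within dyadic generations, where the doubling property \eqref{eq2.1} of $\omega$ is essential. Second, the correct condition is a tent (nested) sum rather than a diagonal one, and the two are genuinely inequivalent: a branching example with $s\ge2$ produces a diagonally-summable $\mu$ for which $\mu(\DD)=\infty$ and the embedding fails. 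Hence the extraction of (iii) cannot be reduced to testing a single function and truly requires the randomized inequality together with the tent-space duality; this is the technical heart of the proof.
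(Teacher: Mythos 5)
Your implication (iii) $\Rightarrow$ (ii) is broken: the pointwise bound $\tmu\preceq M(\hmu\,\omega)/\omega$ you want to extract from the splitting $|\xi-z|\approx 2^{j}(1-|z|)$ is false. That splitting does dominate the $j$-th piece of the Poisson integral by $4^{-j}$ times a cone integral of aperture $\approx 2^{j}$, but wide-aperture cone integrals cannot be dominated pointwise by the maximal function of the fixed-aperture one; change of aperture is an $L^p$-norm phenomenon, not a pointwise one. Concretely, take $\omega\equiv 1$, fix a large integer $j$, and let $\mu=\sum_{k=j+1}^{j+K}2^{j-k}\delta_{z_k}$ with $z_k=(1-2^{-k})e^{i2^{j-k}}.$ Since $1-|z_k|\asymp 2^{-k}$ and $|1-z_k|\asymp 2^{j-k},$ each mass contributes $\asymp 2^{-j}$ to $\tmu(1),$ so $\tmu(1)\asymp K2^{-j}\rightarrow\infty$ as $K\rightarrow\infty.$ On the other hand $\hmu=2^{j}\sum_{k}\chi_{T(z_k)},$ where the arcs $T(z_k)$ (length $\asymp 2^{-k},$ centered at $e^{i2^{j-k}}$) are pairwise disjoint, and any arc $J\ni 1$ of length $\ell$ meets only those $T(z_k)$ with $2^{j-k}\preceq \ell$; hence $\int_{J}\hmu\preceq 2^{j}\cdot 2^{-j}\ell=\ell$ and $M(\hmu)(1)\preceq 1.$ (Your weighted ingredients --- $\omega^{1-s}\in A_s$ for $s=\frac{p}{p-q}$ and the boundedness of $M$ on $L^s(\omega^{1-s})$ --- are correct; it is the pointwise majorization that fails.) Since this is your only implication landing on (ii), the equivalence with (ii) is not established even if everything else were granted.

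Second, your (i) $\Rightarrow$ (iii) is a plan rather than a proof: the two ``obstacles'' you name --- the integrated square-function bound for $p>2$ and the weighted tent-sequence duality extracting $\hmu\in L^{\frac{p}{p-q}}(\omega)$ --- are precisely the content of Luecking's argument, and neither is carried out (both require genuine work with the $A_1$/doubling structure). More importantly, this machinery is unnecessary, and your closing claim that the conclusion ``truly requires'' randomization is refuted by the paper. The paper first linearizes (i) by Riesz factorization: writing $f=BF$ with $F$ zero-free and $|F^*|=|f^*|,$ condition (i) is equivalent to $\int_{\DD}|f|\,d\mu\leq C\|f\|_{\H^{p/q}(\omega)}$ for all $f\in\H^{p/q}(\omega),$ where $p/q>1.$ Then for $0\leq h\in L^{p/q}(\omega),$ Fubini gives $\int_{\DD}Ph\,d\mu=\int_{\PD}\tmu\, h\,\omega\,|d\xi|;$ since $|Ph|\leq |\H h|$ and $\H:L^{p/q}(\omega)\rightarrow\H^{p/q}(\omega)$ is bounded, the linearized (i) yields $\int_{\PD}\tmu\, h\,\omega\,|d\xi|\preceq\|h\|_{L^{p/q}(\omega)},$ and duality in $L^{p/q}(\omega)$ gives (ii) directly; (iii) then follows from the pointwise estimate in your own step (ii) $\Rightarrow$ (iii). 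Your (iii) $\Rightarrow$ (i) (Fubini over cones plus H\"{o}lder, using the weighted non-tangential maximal estimate) is correct and matches the paper's argument. So the repair is to discard both problematic steps and close the circle by the short chain (i) $\Rightarrow$ (ii) $\Rightarrow$ (iii) $\Rightarrow$ (i).
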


Let $\mu$ be a positive Borel measure on $\DD,$ the area operator $A_{\mu}$ is defined by
\[A_{\mu}f(\xi):=\int_{\Gamma(\xi)}|f(z)|\frac{1}{1-|z|}d\mu(z),~~~\xi\in\PD,~~~f\in\H(\DD).\]
If $\nu$ is a positive Borel measure on $\PD,$ we denote $d\mu_{\nu}^{\omega}(z):=\frac{\omega(T(z))}{\nu(T(z))}d\mu(z)$
and define the generalized area operator $A_{\mu,\nu}$ by:
\[A_{\mu,\nu}f(\xi):=\int_{\Gamma(\xi)}|f(z)|\frac{1}{\nu(T(z))}d\mu(z),~~~\xi\in\PD,~~~f\in\H(\DD).\]
Here $T(z)$ is defined by
\[T(z):=\lf\{\xi\in\PD: z\in\Gamma(\xi)\rt\}.\]
It is well known that
\begin{align}\label{e1}
\int_{\PD}\chi_{\Gamma(\xi)}(z)|d\xi|\asymp 1-|z|, ~~~\forall z\in\DD
\end{align}
and
\begin{align*}
 |T(z)|\asymp 1-|z|, \forall z\in \Gamma(\xi).
\end{align*}
Area operators is an important research topic in harmonic analysis. It relates to, for instance, Poisson integral, the non-tangential maximal function,
Littlewood-Paley operators and tent spaces, etc. Cohn studied the area operators $A_{\mu}$ from the Hardy spaces $\H^p$ to $L^p$ in \cite{Co},
it states that for a positive Borel measure $\mu$ on $\DD$ and $0<p<\infty,$ $A_{\mu}$ from $\H^p$ to $L^p$ is bounded if and only if $\mu$ is a Carleson measure. The approach by Cohn relies on  John and Nirenberg's estimate and Calder\'{o}n-Zygmund decomposition.
More recently, Gong,  Wu  and the first author extended Cohn's result in \cite{Go}. They used the same ideas and together with the Riesz factorization
of Hardy spaces. Wu characterized the positive Borel measure $\mu$ on the unit disk $\DD$ for which the area operator is bounded from standard weighted Bergman
space $A_{\alpha}^p$ to Lebesgue space $L^p(\PD)$ in \cite{Wu, WuZ}. In \cite{Pel}, Pel\'{a}ez, R\"{a}tty\"{a} and Sierra gave the
boundedness and compactness of the generalized area operator from weighted Bergman spaces to weighted Lebesgue spaces.

As an application of Theorems \ref{a} and \ref{b},
we obtained necessary and sufficient conditions for the boundedness of $\A$ from $\H^p(\omega)$ to $L^q(\omega)$.

\begin{theorem}\label{c}
Let $\mu$ and $\nu$ be positive Borel measures on $\DD$ and $\PD$ respectively.
Suppose that $\omega$ is a non-negative measurable function on $\PD$ satisfies
$\mu(\{z\in\DD:\nu(T(z))=0\})=0=\mu(\{z\in\DD:\omega(T(z))=0\}).$
If $\omega\in A_1$ and $0<p\leq q<\infty,$ then $\A: \H^p(\omega)\rightarrow L^q(\omega)$ is bounded
if and only if there exists a positive constant $C$ such that for all arcs $I\subseteq \PD,$
\begin{align}\label{e2}
\mv(S(I))\leq C\lf(\int_{I}\omega(\xi)|d\xi|\rt)^{1+\frac{1}{p}-\frac{1}{q}}
\end{align}
holds.
\end{theorem}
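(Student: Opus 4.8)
The plan is to derive \eqref{e2} as the bridge condition between the boundedness of $\A$ and the embedding Theorem \ref{a}. Write $\omega(I):=\int_I\omega(\xi)|d\xi|$ and set $s:=p+1-\frac{p}{q}$, so that $\frac{s}{p}=1+\frac1p-\frac1q$ and, since $q\ge p$ (and, in the duality range, $q\ge1$), one checks $p\le s\le q$. With this choice, \eqref{e2} is exactly the hypothesis \eqref{eq2.3} of Theorem \ref{a} applied to the measure $\mv$ with exponent pair $(p,s)$; hence, throughout, \eqref{e2} is equivalent to the embedding $\|f\|_{L^s(d\mv)}\le C\|f\|_{\H^p(\omega)}$ for all $f\in\H^p(\omega)$. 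The standing hypotheses $\mu(\{\nu(T(z))=0\})=\mu(\{\omega(T(z))=0\})=0$ guarantee that $d\mv=\frac{\omega(T(z))}{\nu(T(z))}\,d\mu$ and all cone integrals below are well defined $\mu$-a.e.

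For necessity, assuming $\A:\H^p(\omega)\to L^q(\omega)$ is bounded, I would test on the family $g_I(z)=|I|^{\beta}\bigl(\frac{1-|a|^2}{(1-\bar a z)^2}\bigr)^{\beta}$ with $a=(1-|I|)\xi_I$ and $\beta$ large (depending only on the doubling constant in \eqref{eq2.1}). A dyadic–annulus estimate based on \eqref{eq2.1} gives $\|g_I\|^p_{\H^p(\omega)}\asymp\omega(I)$, while $|g_I|\asymp1$ on $S(I)$. For $\xi$ in a fixed subarc $I'\subseteq I$ with $|I'|\asymp|I|$, the cone $\Gamma(\xi)$ captures the top half $\widehat S(I)$ of $S(I)$, on which $\omega(T(z))\asymp\omega(I)$ by \eqref{eq2.1}; hence $\A g_I(\xi)\gtrsim\mv(\widehat S(I))/\omega(I)$ there. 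Raising to the power $q$, integrating $\omega$ over $I'$, and using boundedness yields $\mv(\widehat S(I))\lesssim\omega(I)^{1+\frac1p-\frac1q}$. Covering $S(I)$ by the top halves of its dyadic subboxes and summing, using the reverse doubling of the $A_1$ weight $\omega$ to make the geometric series converge, upgrades this to \eqref{e2}.

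For sufficiency when $q\ge1$, I would dualize against $0\le h\in L^{q^\ast}(\omega)$, $q^\ast=q/(q-1)$. After Fubini (using $z\in\Gamma(\xi)\Leftrightarrow\xi\in T(z)$) and the $A_1$ bound $\frac{1}{\omega(T(z))}\int_{T(z)}h\,\omega\le\inf_{T(z)}\M_\omega h$, the pairing becomes $\int_{\DD}|f(z)|\bigl(\inf_{T(z)}\M_\omega h\bigr)\,d\mv(z)$. Splitting by Hölder on $d\mv$ with exponents $s$ and $s^\ast$, the first factor $\bigl(\int|f|^s\,d\mv\bigr)^{1/s}\lesssim\|f\|_{\H^p(\omega)}$ by the reduction above, while the second factor $\bigl(\int_\DD(\inf_{T(z)}\M_\omega h)^{s^\ast}\,d\mv\bigr)^{1/s^\ast}$ is controlled by $\|h\|_{L^{q^\ast}(\omega)}$ via Corollary \ref{coro1}: the operator $\M_\omega:L^{q^\ast}(\omega)\to L^{s^\ast}(\mv)$ is bounded because its hypothesis $\M_{\omega,s/p}(\mv)\in L^\infty$ is precisely \eqref{e2} (note $s^\ast/q^\ast=\frac sp$) and $q^\ast\le s^\ast$ since $q\ge s$. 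The identity $\frac1p+\frac1{q^\ast}=1+\frac1p-\frac1q=\frac sp$ is what makes Hölder close, giving $\|\A f\|_{L^q(\omega)}\lesssim\|f\|_{\H^p(\omega)}$; the endpoint $q=1$ is handled directly with $h\in L^\infty(\omega)$ and no split.

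The main obstacle is the range $0<q<1$, where the duality of the previous step is unavailable. Here I would discretize $\DD$ into the standard dyadic Carleson boxes used in the proof of Theorem \ref{a}; on each box $T(z)$ the quantities $\omega(T(z)),\nu(T(z))$ are comparable to their box values and $|f|$ is comparable to a single value modulo $\M_\omega$. The elementary inequality $(\sum_j a_j)^q\le\sum_j a_j^q$ then reduces $\|\A f\|^q_{L^q(\omega)}$ to a sum of box contributions that is estimated against \eqref{e2}. The delicate point is to keep the exponents sharp through this discretization, so that only the $L^p(\omega)$–norm of $f$ appears; this is exactly where the stopping–time/decomposition machinery underlying Theorem \ref{a} must be reused rather than the crude non-tangential bound $|f(z)|\le\inf_{T(z)}(\M_\omega(|f|^p))^{1/p}$, which destroys $L^p(\omega)$–control.
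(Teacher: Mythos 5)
Your overall architecture (reduce everything to Theorem \ref{a} and Corollary \ref{coro1} via the exponent $s=p+1-\frac pq$) is the paper's, and your sufficiency proof for $q\ge1$ is essentially identical to the paper's: Fubini plus $\frac{1}{\omega(T(z))}\int_{T(z)}h\,\omega\le\M_\omega h(z)$ reduces the pairing to $\int_{\DD}|f|\,\M_\omega h\,d\mv$, and your identity $s^{\ast}/q^{\ast}=s/p$ is exactly what lets Corollary \ref{coro1} and Theorem \ref{a} close the H\"{o}lder estimate. The genuine gap is in your necessity argument, and it occurs exactly at $p=q$, a case the theorem includes. Your only mechanism for reaching \eqref{e2} is to bound $\mv$ on the top halves $\widehat{S}(Q)=\{z\in S(Q):1-|z|\ge\frac12|Q|\}$ of dyadic $Q\subseteq I$ by testing, getting $\mv(\widehat{S}(Q))\preceq\omega(Q)^{1+\delta}$ with $\delta=\frac1p-\frac1q$, and then to sum. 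Granting reverse doubling, $\omega(Q)\preceq2^{-\epsilon k}\omega(I)$ for level-$k$ arcs, so
\begin{align*}
\sum_{k\ge0}\ \sum_{Q\ \mathrm{level}\ k}\omega(Q)^{1+\delta}\ \preceq\ \sum_{k\ge0}2^{-\epsilon\delta k}\,\omega(I)^{1+\delta},
\end{align*}
which converges if and only if $\delta>0$; at $p=q$ the series is $\sum_k\omega(I)=\infty$. This is not a fixable technicality but a logical obstruction: the piecewise estimates alone cannot imply \eqref{e2} when $\delta=0$, since (taking $\omega\equiv1$, $\nu$ Lebesgue) the measure $d\mv(z)=\frac{dA(z)}{1-|z|}$ satisfies $\mv(\widehat{S}(Q))\asymp|Q|$ for every $Q$ and yet $\mv(S(I))=\infty$ for every box. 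The paper avoids this by arguments that capture the whole box at once: for $q\ge1$ it integrates $\A f_a$ against $\omega$ over $I_a$ to the \emph{first} power (Jensen's inequality), and Fubini then produces $\mv(\Lambda(I_a))$ directly; for $0<q<1$, where Jensen goes the wrong way, it proves the John--Nirenberg type exponential estimate \eqref{eq2.20} for the dyadic sum $F_I$ by an iterated Calder\'{o}n--Zygmund stopping-time decomposition and integrates the resulting distribution function. Neither idea appears in your proposal.

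There is a second gap: your sufficiency plan for $0<q<1$ (dyadic discretization plus $(\sum_ja_j)^q\le\sum_ja_j^q$) is never carried out, and the ``delicate point'' you flag is exactly where it breaks, so as written it is not a proof. The step you are missing is short and needs no decomposition at all: for $z\in\Gamma(\xi)$ write $|f(z)|\le(Nf(\xi))^{(1-q)p/q}|f(z)|^{1-(1-q)p/q}$, so that
\begin{align*}
\|\A f\|^q_{L^q(\omega)}\ \le\ \int_{\PD}(Nf(\xi))^{(1-q)p}\lf(\int_{\Gamma(\xi)}|f(z)|^{1+p-\frac pq}\frac{d\mu(z)}{\nu(T(z))}\rt)^{q}\omega(\xi)|d\xi|,
\end{align*}
then apply H\"{o}lder with exponents $\frac1{1-q},\frac1q$, Fubini (turning the inner integral into $\int_{\DD}|f|^{s}d\mv$ with $s=1+p-\frac pq$), the maximal estimate $\|Nf\|_{L^p(\omega)}\preceq\|f\|_{\H^p(\omega)}$, and Theorem \ref{a}. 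Finally, a minor inaccuracy in your necessity sketch: with aperture $2$, $\Gamma(\xi)$ does not contain the whole top half of $S(I)$ for every $\xi$ in a subarc of length comparable to $|I|$, only the points $z$ with $|\xi-z/|z||<1-|z|$; so even the top-half estimate needs a further splitting. That defect is repairable; the $p=q$ failure above is not.
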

\begin{theorem}\label{d}
Let $\mu$ and $\nu$ be positive Borel measures on $\DD$ and $\PD$ respectively.
Suppose that $\omega$ is a non-negative measurable function on $\PD$ satisfies
$\mu(\{z\in\DD:\nu(T(z))=0\})=0=\mu(\{z\in\DD:\omega(T(z))=0\}).$
If $\omega\in A_1$ and $1\leq q<p<\infty,$ then $\A: \H^p(\omega)\rightarrow L^q(\omega)$ is bounded if and only if
the following function
\[\hat{\mu}_{\nu}(\xi):=\int_{\Gamma(\xi)}\frac{1}{\nu(T(z))}d\mu(z)\]
belongs to $L^{\frac{pq}{p-q}}(\omega).$
\end{theorem}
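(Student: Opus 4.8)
The plan is to prove the two implications separately. The sufficiency is elementary and rests on a pointwise domination of $\A f$ by the non-tangential maximal function together with H\"older's inequality, whereas the necessity is the genuinely hard direction and will require a duality reduction followed by the construction of a suitable family of analytic test functions. Throughout I write $Nf(\xi)=\sup_{z\in\Gamma(\xi)}|f(z)|$ for the non-tangential maximal function and I recall that $z\in\Gamma(\xi)$ is equivalent to $\xi\in T(z)$, and that $\vv=\A\mathbf{1}$.

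For the sufficiency, I would first record the pointwise estimate. Since $|f(z)|\le Nf(\xi)$ whenever $z\in\Gamma(\xi)$, pulling this bound out of the defining integral gives
\[\A f(\xi)=\int_{\Gamma(\xi)}|f(z)|\frac{1}{\nu(T(z))}d\mu(z)\le Nf(\xi)\int_{\Gamma(\xi)}\frac{1}{\nu(T(z))}d\mu(z)=Nf(\xi)\,\vv(\xi).\]
Raising to the $q$-th power, integrating against $\omega$, and applying H\"older's inequality with exponents $p/q$ and $p/(p-q)$ (legitimate since $q<p$) yields
\[\|\A f\|_{L^q(\omega)}\le\|Nf\|_{L^p(\omega)}\,\|\vv\|_{L^{pq/(p-q)}(\omega)}.\]
Finally, since $p>1$ and $\omega\in A_1\subseteq A_p$, the function $f\in\H^p(\omega)\subseteq\H^1$ is the Poisson integral of its boundary values, so $Nf$ is dominated by the Hardy--Littlewood maximal function of those boundary values, which is bounded on $L^p(\omega)$; hence $\|Nf\|_{L^p(\omega)}\lesssim\|f\|_{\H^p(\omega)}$ and the boundedness of $\A$ follows. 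I note that this half in fact works for every $0<q<p$.

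For the necessity, assume $\A\colon\H^p(\omega)\to L^q(\omega)$ is bounded; the goal is $\vv\in L^{pq/(p-q)}(\omega)$, equivalently $(\vv)^q\in L^{p/(p-q)}(\omega)$. Since $q<p$, the exponent conjugate to $p/(p-q)$ is $p/q>1$, so by duality it suffices to prove
\[\int_{\PD}(\vv(\xi))^q h(\xi)\,\omega(\xi)|d\xi|\le C\]
for every non-negative $h$ with $\|h\|_{L^{p/q}(\omega)}\le1$ (after a standard truncation to guarantee finiteness). The heart of the matter is to manufacture, from such an $h$, an analytic function $f\in\H^p(\omega)$ with $\|f\|_{\H^p(\omega)}\lesssim1$ for which $\A f$ is comparable, in an averaged sense, to $\vv\cdot h^{1/q}$, so that the assumed boundedness returns exactly the displayed dual bound. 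To build $f$ I would discretize $\DD$ by a Whitney/dyadic decomposition into boxes $Q_j$ with centers $z_j$ and associated arcs $I_j$, so that $\vv(\xi)\asymp\sum_j\frac{\mu(Q_j)}{\nu(T(z_j))}\chi_{I_j}(\xi)$, and then form a randomized series $f_t(z)=\sum_j c_j\,r_j(t)\,b_j(z)$, where the $b_j$ are normalized analytic atoms adapted to $Q_j$, the $r_j$ are Rademacher functions, and the coefficients $c_j$ are read off from the discretized values of $h$. Averaging in $t$ and invoking Khinchine's inequality replaces $|f_t|$ by the square function $(\sum_j|c_j b_j|^2)^{1/2}$, which can be controlled in $\H^p(\omega)$ using the Carleson/frame properties of the atoms and the $A_1$ character of $\omega$, while simultaneously producing the desired lower bound for $\int_0^1(\A f_t)^q\,dt$.

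I expect this last construction to be the main obstacle. A single deterministic test function is not enough: testing against $f\equiv1$ only yields $\vv\in L^q(\omega)$, which is strictly weaker than the required $L^{pq/(p-q)}(\omega)$ membership, and an outer function with prescribed boundary modulus produces geometric rather than arithmetic averages and is therefore too small in the non-tangential approach region to detect $\mu$. The randomization via Khinchine's inequality is precisely what repairs this, but carrying it out rigorously in the weighted setting---choosing the atoms $b_j$, verifying that the square function stays in $L^p(\omega)$ with the correct norm, and matching the lower bound for $\A f_t$ against the discretized $\vv$---is the delicate part. The hypotheses $1\le q$ and $\omega\in A_1$ enter here: the former legitimizes the $L^{p/q}$--$L^{p/(p-q)}$ duality and the handling of the $q$-th power, while the latter supplies the doubling and averaging estimates needed to pass between $\int_{I_j}\omega$, pointwise values of $\omega$, and the atomic norms. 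A direct reduction to Theorem \ref{b} applied to $\mv$ does not seem to capture the exponent $pq/(p-q)$, since one only obtains the one-sided comparison $\widehat{(\mv)}_\omega\lesssim\vv$ and the extra factor $q$ in the exponent originates from the $q$-th power intrinsic to the area operator.
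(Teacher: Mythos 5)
Your sufficiency argument is correct and is exactly the paper's: the pointwise bound $\A f(\xi)\leq Nf(\xi)\vv(\xi)$, H\"older with exponents $\frac{p}{q}$ and $\frac{p}{p-q}$, and the boundedness of the non-tangential maximal function on $L^p(\omega)$. The necessity half, however, contains a genuine gap: it is a program, not a proof. The entire burden is placed on the randomized atomic construction (Whitney boxes, normalized atoms $b_j$, Rademacher signs, Khinchine), and you yourself flag it as the main obstacle without executing any of its steps: the atoms are never specified, the upper bound $\int_0^1\|f_t\|^p_{\H^p(\omega)}dt\preceq 1$ is never proved (it would need a weighted Littlewood--Paley/square-function theorem for $\H^p(\omega)$ that neither you nor the paper establishes), and the matching lower bound for $\int_0^1(\A f_t(\xi))^q dt$ against the discretized $\vv$ is never verified. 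As written, the hard direction remains unproved.

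The gap is avoidable, because your reason for rejecting the route through Theorem \ref{b} rests on a miscalculation. You are right that (for $q>1$) testing with $g\equiv1$ only yields $\H^p(\omega)\hookrightarrow L^1(d\mv)$ and hence a wrong exponent; but the correct move is to pair against an \emph{arbitrary} $g\in\H^{q'}(\omega)$: since $|g(z)|\leq Ng(\xi)$ on $\Gamma(\xi)$, Fubini, H\"older and the assumed boundedness give $\int_{\DD}|fg|\,d\mv\leq\int_{\PD}\A f\,Ng\,\omega\,|d\xi|\preceq\|f\|_{\H^p(\omega)}\|g\|_{\H^{q'}(\omega)}$, and the weighted Riesz factorization $\H^p(\omega)\cdot\H^{q'}(\omega)=\H^t(\omega)$ of Lemma \ref{lmm5}, with $\frac1t=\frac1p+\frac1{q'}$, upgrades this to the embedding $\H^t(\omega)\hookrightarrow L^1(d\mv)$. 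Theorem \ref{b}, applied with exponents $(t,1)$, then places $\sigma(\xi):=\frac{1}{\omega(\xi)}\int_{\Gamma(\xi)}\frac{1}{1-|z|}d\mv(z)$ in $L^{t/(t-1)}(\omega)$, and $\frac{t}{t-1}=\frac{pq}{p-q}$ exactly: the factor of $q$ you thought was lost enters through $q'$ in the duality, and this (not the $L^{p/q}$--$L^{p/(p-q)}$ duality, which works for all $0<q<p$) is where the hypothesis $1\leq q$ is really used. Your remaining objection --- that $A_1$ only gives the one-sided comparison $\sigma\preceq\vv$, while one needs $\vv\preceq\sigma$ --- is in fact a fair criticism of the paper's final line, which asserts $\sigma\asymp\vv$ although $A_1$ only yields $\omega(T(z))/|T(z)|\preceq\omega(\xi)$; but the repair is a short maximal-function argument, not randomization. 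For $h\geq0$ with $\|h\|_{L^{t}(\omega)}\leq1$ (note $t$ is the conjugate exponent of $\frac{pq}{p-q}$), Fubini gives
\begin{align*}
\int_{\PD}\vv(\xi)h(\xi)\omega(\xi)|d\xi|
=\int_{\DD}\frac{\omega(T(z))}{\nu(T(z))}\lf(\frac{1}{\omega(T(z))}\int_{T(z)}h(\xi)\omega(\xi)|d\xi|\rt)d\mu(z),
\end{align*}
and the inner $\omega$-average of $h$ is at most $\M_{\omega}h(\xi)$ for every $\xi\in T(z)$, hence at most the Lebesgue average of $\M_{\omega}h$ over $T(z)$, which is $\preceq P(\M_{\omega}h)(z)\leq|\H(\M_{\omega}h)(z)|$; applying the embedding $\H^t(\omega)\hookrightarrow L^1(d\mv)$ to $\H(\M_{\omega}h)$, together with the boundedness of $\H$ and of $\M_{\omega}$ on $L^t(\omega)$ ($t>1$, $\omega$ doubling), gives $\int_{\PD}\vv h\,\omega|d\xi|\preceq1$ and hence $\vv\in L^{\frac{pq}{p-q}}(\omega)$. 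So the missing idea in your proposal is the factorization/duality step; once it is in place, the heavy construction you sketched is unnecessary.
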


Throughout this paper, $C$ is a positive constant depending only on index $p,q,\alpha,\cdots,$ not necessary to be the same from one line to another.
Let $f$ and $g$ be two positive functions, for convenience, we write $f\preceq g,$  if $f\leq Cg$ holds. If $f\preceq g$ and $g\preceq f,$ then we set $f\asymp g.$ Also, we write $\omega(I)=\int_{I}\omega(\xi)|d\xi|.$

\section{Preliminary results}
In this section, we state some definitions and results will be used in the paper.
For an arc $I\subseteq \PD,$ the Carleson square based on $I$ is defined by
\[S(I):=\lf\{z\in\DD:1-|I|\leq |z|<1,\frac{z}{|z|}\in I\rt\}.\]
If $I=\PD,$ then $S(I)=\DD.$
Let $\mu$ be a positive Borel measure on $\DD,$ for $0<s<\infty,$ $\mu$ is called an $s$-Carleson measure if there is a positive constant $C$ such that for all arcs $I\subseteq\PD,$
 \[ \mu(S(I))\leq C|I|^{s}.\]
1-Carleson measures are the classical Carleson measures.

Given $f\in L^1(\omega)$ and $z=re^{i\theta}\in\DD,$ we associate the boundary arc
\[I_z:=\lf\{e^{it}:\theta-\frac{1-r}{2}\leq t\leq \theta+\frac{1-r}{2}\rt\}\]
and define the weighted maximal function
\[\M_{\omega}f(z):=\dsup_{I}\frac{1}{\omega(I)}\int_{I}|f(\xi)|\omega(\xi)|d\xi|,\]
where the supremum is taken over all arcs $I\supseteq I_z.$
When $\alpha>0$ and $\mu$ is a positive Borel measure on $\DD,$ denote
\[\M_{\omega,\alpha}(\mu)(z):=\sup_{I\supseteq Iz}\frac{\mu(S(I))}{(\omega(I))^{\alpha}}.\]
We also set $\M_{\omega}(\mu):=\M_{\omega,1}(\mu).$

Suppose that $f$ is a harmonic function on $\DD,$ we define the non-tangential maximal function $Nf$ as the following:
\[Nf(\xi):=\sup_{z\in\Gamma(\xi)}|f(z)|.\]
It is known that $N$ is bounded on $L^p(\omega)$ when $1<p<\infty$ and $\omega\in A_1.$

For $f\in L^{1}(\PD),$  the Poisson integral of $f$ is defined by
\[Pf(z):=\frac{1}{2\pi}\int_{\PD}\frac{1-|z|^{2}}{|\xi-z|^2}f(\xi)|d\xi|.\]
Then $Pf$ is the harmonic extension of $f$ onto $\DD.$ Let
\[Qf(z):=\frac{1}{2\pi}\int_{\PD}\frac{\xi\bar{z}-\bar{\xi}z}{|\xi-z|^2}f(\xi)|d\xi|,\]
we know that $Qf$ is a conjugate to $Pf,$ that is,
\[\H f(z):=Pf(z)+iQf(z)\]
is analytic on $\DD.$ It is standard that for $1<p<\infty$ and $\omega\in A_1,$  an $f$ belongs to $L^p(\omega)$
if and only if $\H(f)$ is in $\H^p(\omega).$ In fact, the operator $\H$ defined above is bounded from $L^p(\omega)$ into $\H^p(\omega).$

In the proofs of theorems, we need the following lemmas.
\begin{lemma}\label{lmm5}
Let $\omega\in A_1$ and $0<t,s,r<\infty$ such that $\frac{1}{t}=\frac{1}{s}+\frac{1}{r}.$ Then $\H^r(\omega)\cdot\H^s(\omega)=\H^t(\omega).$
\end{lemma}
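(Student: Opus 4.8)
The plan is to prove the two inclusions $\H^r(\omega)\cdot\H^s(\omega)\subseteq\H^t(\omega)$ and $\H^t(\omega)\subseteq\H^r(\omega)\cdot\H^s(\omega)$ separately: the first is a Hölder estimate (with a concluding point about membership in $\H^1$), while the second---the genuine factorization---is the heart of the matter and rests on the classical inner--outer factorization together with Hardy-space nesting. Two facts about the weight I would record at the outset. First, since $\omega\in A_1\subseteq A_\rho$ for every $\rho>1$, testing the $A_\rho$ condition on $I=\PD$ gives $\int_{\PD}\omega(\xi)^{-1/(\rho-1)}|d\xi|<\infty$; letting $\rho$ run over $(1,\infty)$ this yields $\int_{\PD}\omega^{-\beta}|d\xi|<\infty$ for every $\beta>0$. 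Second, I will use the nesting $\H^1\subseteq\H^\gamma$ for $0<\gamma\le1$, and the relations $t<r$, $t<s$ forced by $\frac1t=\frac1s+\frac1r$.

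For $\H^r(\omega)\cdot\H^s(\omega)\subseteq\H^t(\omega)$, take $g\in\H^r(\omega)$ and $h\in\H^s(\omega)$. Splitting $\omega=\omega^{t/r}\cdot\omega^{t/s}$, which is legitimate because $\frac{t}{r}+\frac{t}{s}=1$, and applying Hölder's inequality with exponents $\frac{r}{t}$ and $\frac{s}{t}$ yields
\[
\frac{1}{2\pi}\int_{\PD}|g(\xi)h(\xi)|^{t}\omega(\xi)|d\xi|\le\|g\|_{L^r(\omega)}^{t}\,\|h\|_{L^s(\omega)}^{t},
\]
so $gh\in L^t(\omega)$ with $\|gh\|_{L^t(\omega)}\le\|g\|_{L^r(\omega)}\|h\|_{L^s(\omega)}$. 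Since $g,h\in\H^1$ the product $gh$ lies in the Smirnov class, and having boundary values in $L^t(\omega)$ it belongs to $\H^t(\omega)$; this is the step where one passes from the weighted bound to membership in $\H^1$ using the weight integrability recorded above.

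For the reverse inclusion, let $f\in\H^t(\omega)\subseteq\H^1$ and write its inner--outer factorization $f=I_f\,O_f$, where $O_f$ is outer with $|O_f|=|f|$ a.e.\ on $\PD$. As $O_f$ is zero-free, $O_f^{t/r}$ and $O_f^{t/s}$ are well-defined outer functions with boundary moduli $|f|^{t/r}$ and $|f|^{t/s}$. Set
\[
g:=I_f\,O_f^{t/r},\qquad h:=O_f^{t/s},
\]
so that $gh=I_f\,O_f^{\,t/r+t/s}=I_f\,O_f=f$. On the boundary $|g|=|f|^{t/r}$ and $|h|=|f|^{t/s}$, hence $\int_{\PD}|g|^r\omega=\int_{\PD}|h|^s\omega=\int_{\PD}|f|^t\omega<\infty$, giving $g\in L^r(\omega)$ and $h\in L^s(\omega)$. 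To place $g,h$ in $\H^1$ I would use $t/r,t/s<1$, so that $f\in\H^1\subseteq\H^{t/r}\cap\H^{t/s}$ forces $|f|^{t/r},|f|^{t/s}\in L^1(\PD)$; an outer function whose boundary modulus lies in $L^1$ belongs to $\H^1$, whence $O_f^{t/s}\in\H^1$ and, after multiplication by the bounded inner factor $I_f$, also $g\in\H^1$. Thus $g\in\H^r(\omega)$, $h\in\H^s(\omega)$ and $f=gh$.

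The main obstacle is not the algebra of the factorization, which is forced by the exponent relation, but verifying that the analytic factors genuinely lie in the classical space $\H^1$ built into the definition of the weighted spaces: one must control unweighted boundary integrals by weighted ones. The integrability $\int_{\PD}\omega^{-\beta}|d\xi|<\infty$ for all $\beta>0$ (a consequence of $\omega\in A_1$) and the Hardy-space nesting are precisely the tools that resolve this, and I expect the care needed there---rather than the Hölder estimate or the construction of $g,h$---to be the delicate part of the argument.
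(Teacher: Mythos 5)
Your proof follows essentially the same route as the paper's: H\"{o}lder's inequality with the conjugate pair $(r/t,\,s/t)$ for the inclusion $\H^r(\omega)\cdot\H^s(\omega)\subseteq\H^t(\omega)$, and a factorization through a zero-free factor for the reverse inclusion. The paper uses the Riesz factorization $f=BG$ ($B$ a Blaschke product, $G$ zero-free in $\H^1$) and splits $G=G^{t/s}G^{t/r}$, while you use the inner--outer factorization $f=I_fO_f$ and take powers of $O_f$; the difference is immaterial, since the paper's $G$ simply carries the singular inner factor along. Your verification that the factors $O_f^{t/s}$ and $I_fO_f^{t/r}$ lie in $\H^1$ --- via $|f|^{t/r},|f|^{t/s}\in L^1(\PD)$ and Smirnov's theorem for functions in $N^+$ --- is correct and in fact fills a point the paper passes over in silence, since the definition $\H^p(\omega)=\lf\{f\in\H^1: f\in L^p(\omega)\rt\}$ makes membership in $\H^1$ part of what must be checked.

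However, your treatment of the corresponding point in the \emph{forward} inclusion has a gap when $t<1$. From $gh\in N^+$ with boundary values in $L^t(\omega)$, the integrability $\int_{\PD}\omega^{-\beta}|d\xi|<\infty$ (for $\omega\in A_1$ one even has $\omega\geq c>0$ a.e., by testing the $A_1$ condition on $I=\PD$) only yields unweighted boundary integrability of order $t$, hence $gh\in\H^{t}$; this gives $gh\in\H^1$ only when $t\geq 1$. When $t<1$ no argument can close this gap, because the inclusion is then false under the paper's definition: take $\omega\equiv 1$, $r=s=1$, $t=\frac{1}{2}$, and $g=h=(1-z)^{-1/2}\in\H^1=\H^1(\omega)$; then $gh=(1-z)^{-1}\notin\H^1$, so $gh\notin\H^{1/2}(\omega)$ as that space is defined, even though the H\"{o}lder estimate holds. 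This defect is shared by the paper's own proof, which never checks $\H^1$-membership of the product at all, and is really a flaw in the lemma as stated; it is harmless in context because the lemma is only invoked in the proof of Theorem \ref{d} with $t=\frac{pq}{pq-p+q}>1$, and in that range both your argument and the paper's are complete.
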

\begin{proof}
For $f\in\H^s(\omega),$ $g\in\H^r(\omega),$ H\"{o}lder's inequality yields that
\begin{align*}
\int_{\PD}|f(\xi)g(\xi)|^t\omega(\xi)|d\xi|
&\leq\lf(\int_{\PD}(|f(\xi)|^t)^{\frac{s}{t}}\omega(\xi)|d\xi|\rt)^{\frac{t}{s}}\lf(\int_{\PD}(|g(\xi)|^t)^{\frac{r}{t}}\omega(\xi)|d\xi|\rt)^{\frac{t}{r}}\\
&=\lf(\int_{\PD}|f(\xi)|^s\omega(\xi)|d\xi|\rt)^{\frac{t}{s}}\lf(\int_{\PD}|g(\xi)|^r\omega(\xi)|d\xi|\rt)^{\frac{t}{r}}.
\end{align*}
Therefore, $fg\in\H^t(\omega)$ and $\|fg\|_{\H^t(\omega)}\leq \|f\|_{\H^s(\omega)}\|g\|_{\H^r(\omega)}.$

On the other hand, if $f\in\H^t(\omega)\subseteq\H^1,$ then there exists a Blaschke product $B$ and an $\H^1$ function $G$
such that $f=BG.$ Moreover, $G$ does not vanish in $\DD.$ We have $|B(\xi)|=1$ almost everywhere, so $|f(\xi)|=|G(\xi)|$ almost everywhere.
Hence, $f\in\H^t(\omega)$ implies that $G\in\H^t(\omega).$ Since $G$ does not vanish in $\DD,$ $G$ can be factored in the form $G=G^{t/s}G^{t/r}.$
Let $g=BG^{t/s},$ $h=G^{t/r},$ then $f=gh.$
Since
\begin{align*}
\int_{\PD}|g(\xi)|^s\omega(\xi)|d\xi|=\int_{\PD}|G(\xi)|^t\omega(\xi)|d\xi|<\infty
\end{align*}
and
\begin{align*}
\int_{\PD}|h(\xi)|^r\omega(\xi)|d\xi|=\int_{\PD}|G(\xi)|^t\omega(\xi)|d\xi|<\infty,
\end{align*}
these imply that $g\in\H^s(\omega)$ and $h\in\H^r(\omega).$
\end{proof}

\begin{lemma}\label{lmm 4}
Let $\omega\in A_1.$ Then there exists a positive constant $C$ such that
\begin{align*}
|f(z)|\leq C\M_{\omega}f(z), \  \  \  \   z\in \DD,
\end{align*}
for all $f\in \H(\DD).$
\end{lemma}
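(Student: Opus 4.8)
The plan is to reduce the pointwise bound to the classical estimate of a Poisson integral by a maximal average, carrying the $A_1$ weight through the argument. Since the inequality is only needed for functions with boundary values (indeed $\H^p(\omega)\subseteq\H^1$), I may assume $f\in\H^1$, so that $f$ equals the Poisson integral of its boundary values. As the Poisson kernel is positive,
\begin{align*}
|f(z)|=\lf|\frac{1}{2\pi}\int_{\PD}\frac{1-|z|^2}{|\xi-z|^2}f(\xi)|d\xi|\rt|\leq \frac{1}{2\pi}\int_{\PD}\frac{1-|z|^2}{|\xi-z|^2}|f(\xi)||d\xi|=:P[|f|](z),
\end{align*}
so it suffices to dominate the Poisson integral $P[|f|](z)$ by $\M_{\omega}f(z)$.

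Next I would fix $z=re^{i\theta}$, let $I_0:=I_z$ be the arc of length $\asymp 1-|z|$ centered at $e^{i\theta}$, and set $I_k:=2^kI_0$ for $k\ge1$, so that $\PD=I_0\cup\bigcup_{k\ge1}(I_k\setminus I_{k-1})$ and $|I_k|\asymp 2^k(1-r)$. Using $|\xi-z|^2\asymp (1-r)^2+|t-\theta|^2$ for $\xi=e^{it}$, an elementary computation gives $\frac{1-|z|^2}{|\xi-z|^2}\preceq \frac{1}{2^{2k}(1-r)}$ on the ring $I_k\setminus I_{k-1}$. Summing the contribution of each ring,
\begin{align*}
P[|f|](z)\preceq\sum_{k\ge0}\frac{1}{2^{2k}(1-r)}\int_{I_k}|f(\xi)||d\xi|.
\end{align*}

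The crucial step is to convert these unweighted arc integrals into the weighted averages appearing in $\M_{\omega}$. Because $I_k\supseteq I_z$, the definition gives $\frac{1}{\omega(I_k)}\int_{I_k}|f|\omega\,|d\xi|\leq \M_{\omega}f(z)$ for every $k$. Invoking the $A_1$ condition in the form $\inf_{\xi\in I_k}\omega(\xi)\succeq \omega(I_k)/|I_k|$ (an essential infimum), I would estimate
\begin{align*}
\int_{I_k}|f(\xi)||d\xi|\leq \frac{1}{\inf_{I_k}\omega}\int_{I_k}|f(\xi)|\omega(\xi)|d\xi|\preceq \frac{|I_k|}{\omega(I_k)}\int_{I_k}|f|\omega\,|d\xi|\preceq |I_k|\,\M_{\omega}f(z).
\end{align*}
Substituting $|I_k|\asymp 2^k(1-r)$ bounds the $k$-th term of the series by $\preceq 2^{-k}\M_{\omega}f(z)$, and summing the geometric series yields $|f(z)|\preceq P[|f|](z)\preceq \M_{\omega}f(z)$, as desired.

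The main obstacle is precisely this last conversion: the Poisson kernel naturally produces Lebesgue (unweighted) arc averages, whereas $\M_{\omega}$ is built from $\omega$-weighted averages, and the two must be compared uniformly across all scales $2^k(1-r)$. This is exactly where the $A_1$ hypothesis --- rather than a weaker $A_p$ condition --- is essential, since $A_1$ supplies the pointwise lower bound $\omega\succeq\omega(I)/|I|$ on each arc that makes the comparison possible; the remaining kernel estimates and the geometric summation are routine.
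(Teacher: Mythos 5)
Your argument is correct, but it cannot be said to follow the paper's proof, because the paper gives none: it merely states that the proof is ``similar to that of Lemma 2.5 in \cite{Pela}'', a Bergman-space analogue which is proved there by subharmonicity estimates on disks rather than by boundary harmonic analysis. Your route --- Poisson representation $|f(z)|\leq P[|f|](z)$, dyadic decomposition of $\PD$ into the rings $I_k\setminus I_{k-1}$ around $I_z$, the kernel bound $\frac{1-|z|^2}{|\xi-z|^2}\preceq 2^{-2k}(1-|z|)^{-1}$ on the $k$-th ring, the $A_1$ pointwise bound $\operatorname{ess\,inf}_{I_k}\omega\succeq\omega(I_k)/|I_k|$ to convert Lebesgue averages into $\omega$-averages dominated by $\M_{\omega}f(z)$, and the final geometric summation --- is a complete, self-contained proof and is the natural Hardy-space counterpart of the cited argument; each step checks out. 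One point in your proposal deserves emphasis: the reduction to $f\in\H^1$ is not a convenience but a necessity, since for general $f\in\H(\DD)$ the stated inequality is false even when boundary values exist. Indeed, $f(z)=\exp\lf(\frac{1+z}{1-z}\rt)$ has unimodular nontangential boundary values, so $\M_{\omega}f\leq 1$ everywhere, while $f(r)\to\infty$ as $r\to 1^-$; thus the lemma must be read, as you do and as the paper implicitly does (recall $\H^p(\omega)\subseteq\H^1$ by definition), for functions admitting a Poisson representation. If you want the statement in slightly greater generality, your argument extends verbatim to the Smirnov class $N^+$, where $|f(z)|\leq P[|f|](z)$ still holds via $\log|f|\leq P[\log|f|]$ and Jensen's inequality; this also covers the paper's later application of the lemma to fractional powers $|f|^{1/\alpha}$ of zero-free factors.
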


The proof of Lemma \ref{lmm 4} is similar to that of Lemma 2.5 in \cite{Pela}, the details is omitted here.

\section{Proofs of main theorems}
\begin{proof}[Proof of Theorem \ref{a}]
Assume that (\ref{eq2.2}) holds first.
Let $I\subseteq\PD$ be an open arc, we may assume that $|I|<\frac{1}{2}$ (the argument of the case $|I|\geq\frac{1}{2}$ is similar and is omitted here).
We can find a point $a=a(I)\in\DD$ such that $\frac{a}{|a|}$ is the center of $I$ and $1-|a|=|I|,$ that is $I=I_a.$
Define the arcs $I_k$ by
\[I_k:=2^kI, ~~~~k=0,1\cdots,~m,~~~~I_{m+1}=\PD,\]
where $m$ is the largest natural number satisfies $2^m|I|<1.$
Since $\omega\in A_1,$ we fix a positive constant $M$ such that (\ref{eq2.1}) holds.
Let
\[g(z)=\frac{1}{1-\bar{a}z},~~~|z|\leq 1.\]
By the same argument as in \cite{Gi} we see that if $a=|a|e^{i\varphi},$ then
\begin{align}\label{eq2.4}
|g(e^{i\theta})|\leq \frac{3}{(|I|^2+|\theta-\varphi|^2)^{\frac{1}{2}}}.
\end{align}
It is easy to see that
\begin{align}\label{eq2.5}
|g(e^{i\theta})|\leq \frac{3}{|I|},~~~if ~e^{i\theta}\in I_0=I,
\end{align}
and
\begin{align}\label{eq2.6}
|g(e^{i\theta})|\leq \frac{3}{2^{k-1}|I|},~~~if ~e^{i\theta}\in I_{k+1}\setminus I_k,~~~k=0,1\cdots,~m.
\end{align}
Choosing an integer $N$ sufficient large such that $\frac{M}{2^{Np}}<1.$
Setting $f(z)=g(z)^N,$ then $f\in\H^p(\omega).$ Combining with (\ref{eq2.1}), (\ref{eq2.5}) and (\ref{eq2.6}) and using the definition of $I_k,$ we deduce
\begin{align*}
\|f\|^p_{\H^p(\omega)}
&=\frac{1}{2\pi}\int_{I_0}|g(\xi)|^{Np}\omega(\xi)|d\xi|+\sum_{k=0}^m\frac{1}{2\pi}\int_{I_{k+1}\setminus I_k}|g(\xi)|^{Np}\omega(\xi)|d\xi|\\
&\leq \frac{3^{Np}}{2\pi|I|^{Np}}\int_{I}\omega(\xi)|d\xi|+
\frac{1}{2\pi}\sum_{k=0}^m\frac{3^{Np}}{2^{(k-1)Np}|I|^{Np}}\int_{I_{k+1}\setminus I_k}\omega(\xi)|d\xi|\\
&\leq \frac{3^{Np}}{2\pi|I|^{Np}}\lf(\int_{I}\omega(\xi)|d\xi|+\sum_{k=0}^m\frac{M^k}{2^{(k-1)Np}}\int_{I}\omega(\xi)|d\xi|\rt)\\
&\leq \frac{3^{Np}}{2\pi|I|^{Np}}\lf(1+M^2\sum_{k=0}^{\infty}\lf(\frac{M}{2^{Np}}\rt)^{k-1}\rt)\int_{I}\omega(\xi)|d\xi|.
\end{align*}
Since $\frac{M}{2^{Np}}<1,$ we have
\begin{align}\label{eq2.7}
\|f\|^p_{\H^p(\omega)}\leq \frac{C}{|I|^{Np}}\int_{I}\omega(\xi)|d\xi|.
\end{align}
For $a=|a|e^{i\varphi}$ and $z\in S(I),$ a simple geometric argument shows that
\begin{align*}
\lf|\frac{1}{|a|}e^{i\varphi}-z\rt|\leq \lf|\frac{1}{|a|}e^{i\varphi}-|a|e^{i(\varphi+\frac{1}{2}|I|)}\rt|
=\frac{1}{|a|}\lf(1-2|a|^2\cos\lf(\frac{|I|}{2}\rt)+|a|^4\rt)^{\frac{1}{2}}.
\end{align*}
Thus,
\begin{align}\label{eq2.8}
\frac{1}{|g(z)|}
\leq\lf(1-2|a|^2\cos\lf(\frac{|I|}{2}\rt)+|a|^4\rt)^{\frac{1}{2}}\leq 3|I|.
\end{align}
Therefore, applying (\ref{eq2.2}), (\ref{eq2.7}) and (\ref{eq2.8}), we obtain
\begin{align*}
\mu(S(I))&\leq \int_{S(I)}\bigg(|g(z)|\cdot3|I|\bigg)^{Nq}d\mu(z)\\
&\leq C|I|^{Nq}\|f\|^q_{L^q(d\mu)}\\
&\leq C|I|^{Nq}\|f\|^q_{\H^P(\omega)}\\
&\leq C\lf(\int_{I}\omega(\xi)|d\xi|\rt)^{\frac{q}{p}}.
\end{align*}
This proves (\ref{eq2.3}).

Conversely, for each $s>0,$ we defined the sets $E_s$ as follows:
\[E_s=\{z\in\DD: \M_{\omega}f(z)>s\}.\]
We first show that there exists a positive constant $C\geq 1$ such that for all $f\in\H^1(\omega)$ and $s>0,$
\begin{align}\label{eq2.9}
\mu(E_s)\leq Cs^{-\frac{q}{p}}\|f\|^{\frac{q}{p}}_{\H^1(\omega)}.
\end{align}
If $E_s=\emptyset,$ then (\ref{eq2.9}) is clearly true.
Now suppose that $E_s$ is nonempty. Given $\varepsilon>0,$ write
\[A^{\varepsilon}_{s}=\lf\{z\in\DD: \int_{I_z}|f(\xi)|\omega(\xi)|d\xi|>s(\varepsilon+\omega(I_z))\rt\}\]
and
\[B^{\varepsilon}_{s}=\lf\{z\in\DD: I_z\subseteq I_w~~ for ~~some~~ w\in A^{\varepsilon}_{s}\rt\}.\]
Clearly, $E_s=\displaystyle\bigcup_{\varepsilon>0}B^{\varepsilon}_{s}.$
It is easy to see that $B^{\varepsilon}_{s}$ become larger if $\varepsilon\downarrow 0^+$ and that
\begin{align}\label{eq2.10}
\mu(E_s)=\lim_{\varepsilon\rightarrow 0+}\mu(B^{\varepsilon}_{s}).
\end{align}
Notice that for every $\varepsilon>0$ and $s>0,$ there exists finitely many points $z_n\in A^{\varepsilon}_{s}$ such that
the arcs $I_{z_n}$ are disjoint. In fact, if this is not the case, then we can find a sequence $\{z_n\}_{n=1}^{\infty}\subseteq A^{\varepsilon}_{s}$
such that $I_{z_n}\bigcap I_{z_m}=\emptyset,$ $\forall n\neq m.$ Therefore,
\begin{align}\label{eq2.11}
s\sum_{n}(\varepsilon+\omega(I_z))\leq \sum_{n}\int_{I_z}|f(\xi)|\omega(\xi)|d\xi|\leq \|f\|_{\H^1(\omega)}.
\end{align}
So $\|f\|_{\H^1(\omega)}\geq s\displaystyle\sum_{n}\varepsilon=\infty,$ this is a contradiction.
By Covering Lemma, there exist finite many points $z_1, z_2, \cdots, z_m\in A^{\varepsilon}_{s}$ satisfying the following conditions:

(I) The arcs $I_{z_j}$ are disjoint,

(II) $A^{\varepsilon}_{s}\subseteq \displaystyle\bigcup_{j=1}^{m}\lf\{z\in\DD: I_z\subseteq J_{z_j}\rt\},$ where $J_z$ is the arc of length 5$|I_z|$ whose center
coincides with that of $I_z.$

It follows that
\begin{align}\label{eq2.12}
B^{\varepsilon}_{s}\subseteq \displaystyle\bigcup_{j=1}^{m}\lf\{z\in\DD: I_z\subseteq J_{z_j}\rt\}.
\end{align}
Observe that (\ref{eq2.3}) and the assumption of $\omega$ yield
\begin{align*}
\mu\lf(\lf\{z\in\DD: I_z\subseteq J_{z_j}\rt\}\rt)
&\leq \mu(S(J_{z_j}))\\
&\preceq \omega(J_{z_j})^{\frac{q}{p}}\\
&\preceq \omega(I_{z_j})^{\frac{q}{p}}, ~~~j=1, 2, \cdots, m.
\end{align*}
Combining (\ref{eq2.11}) and (\ref{eq2.12}), we get
\begin{align}\label{eq2.13}
\mu(B^{\varepsilon}_{s})\preceq\sum_{j=1}^m\omega(I_{z_j})^{\frac{q}{p}}
\leq\lf(\sum_{j=1}^m\omega(I_{z_j})\rt)^{\frac{q}{p}}
\leq\lf(\frac{1}{s}\lf\|f\rt\|_{\H^1(\omega)}\rt)^{\frac{q}{p}}.
\end{align}
Using (\ref{eq2.10}) and (\ref{eq2.13}), we see that (\ref{eq2.9}) is true with a constant $C\geq 1$ depending only on $p, q$ and $\omega.$
Fix $\alpha>\frac{1}{p},$ for $s>0,$ let $|f|^{\frac{1}{\alpha}}=g_{\frac{1}{\alpha}, s}+\chi_{\frac{1}{\alpha},s},$
where

$$g_{\frac{1}{\alpha}, s}(\xi)=\left\{
          \begin{array}{cc}
           ~~ \lf|f(\xi)\rt|^{\frac{1}{\alpha}}, & \hbox{~~~~~~~~if ~$\lf|f(\xi)\rt|^{\frac{1}{\alpha}}>\frac{s}{2C}$ ;} \\
          ~~  0, & \hbox{otherwise.}
          \end{array}
        \right.$$
Note that $p>\frac{1}{\alpha},$ the function $g_{\frac{1}{\alpha}, s}$ belongs to $L^1_{\omega}(\PD)$ for all $s>0.$ Moreover,
\begin{align*}
\M_{\omega}(|f|^{\frac{1}{\alpha}})\leq \M_{\omega}(g_{\frac{1}{\alpha}, s})+\M_{\omega}(\chi_{\frac{1}{\alpha}, s})
\leq\M_{\omega}(g_{\frac{1}{\alpha}, s})+\frac{s}{2C}.
\end{align*}
So
\begin{align}\label{eq2.14}
\lf\{z\in\DD: \M_{\omega}(|f|^{\frac{1}{\alpha}})>s\rt\}\subseteq \lf\{z\in\DD: \M_{\omega}(g_{\frac{1}{\alpha}, s})>\frac{s}{2}\rt\}.
\end{align}
 By Lemma \ref{lmm 4} and Minkowski's inequality of continuous form, we deduce that
\begin{align*}
\int_{\DD}|f(z)|^qd\mu(z)&\preceq\int_{\DD}(\M_{\omega}(|f|^{\frac{1}{\alpha}})(z))^{q\alpha}d\mu(z)\\
&\leq q\alpha\int_0^{\infty}s^{q\alpha-1}\mu\lf(\lf\{z\in\DD: \M_{\omega}(g_{\frac{1}{\alpha}, s})>\frac{s}{2}\rt\}\rt)ds\\
&\preceq\int_0^{\infty}s^{q\alpha-1-\frac{q}{p}}\|g_{\frac{1}{\alpha}, s}\|^{\frac{q}{p}}_{L^1_{\omega}}ds\\
&\preceq \lf(\int_{\PD}|f(\xi)|^{\frac{1}{\alpha}}\omega(\xi)
\lf(\int_{0}^{2C|f(\xi)|^{\frac{1}{\alpha}}}s^{q\alpha-1-\frac{q}{p}}ds\rt)^{\frac{p}{q}}|d\xi|\rt)^{\frac{q}{p}}\\
&\preceq \|f\|^q_{\H^p(\omega)}.
\end{align*}
This proves (\ref{eq2.2}), we complete the proof of Theorem \ref{a}.
\end{proof}

\begin{proof}[Proof of Theorem \ref{b}]
Let us recall the Riesz factorization theorem for $\H^p$ spaces first. For each $f\in\H^p$ and $f\neq0,$ then $f(z)=F(z)B(z)$ with
$F\in\H^p$ has no zero in $\DD$ and $\|F\|_{\H^p}=\|f\|_{\H^p},$ $B$ is a Blaschke product. So the statement (i) is equivalent to
\begin{align}\label{eq2.15}
\int_{\DD}|f(z)|d\mu(z)\leq C\|f\|_{\H^{\frac{p}{q}}(\omega)}, ~~~\forall f\in\H^{\frac{p}{q}}(\omega).
\end{align}
(i) $\Rightarrow$ (ii). For $h\geq 0$ and $h\in L^{\frac{p}{q}}(\omega),$ by Fubini's theorem,
\begin{align*}
\int_{\DD}|Ph(z)|d\mu(z)&=\int_{\PD}\frac{1}{2\pi\omega(\xi)}\int_{\DD}\frac{1-|z|^2}{|\xi-z|^2}d\mu(z)h(\xi)\omega(\xi)|d\xi|\\
&=\int_{\PD}\tmu(\xi)h(\xi)\omega(\xi)|d\xi|.
\end{align*}
Since $\frac{p}{q}>1,$ the operator $\H$ is bounded from $L^{\frac{p}{q}}(\omega)$ to $\H^{\frac{p}{q}}(\omega).$
Then (\ref{eq2.15}) implies that
\begin{align*}
\int_{\DD}|Ph(z)|d\mu(z)\leq \int_{\DD}|\H h(z)|d\mu(z)\leq C\|\H h\|_{\H^{\frac{p}{q}}(\omega)}\leq C\|h\|_{L^{\frac{p}{q}}(\omega)}.
\end{align*}
Hence
\[\int_{\PD}\tmu(\xi)h(\xi)\omega(\xi)|d\xi|\leq C\|h\|_{L^{\frac{p}{q}}(\omega)},~~~~\forall h\in L^{\frac{p}{q}}(\omega),~~~h\geq 0.\]
A duality argument shows that $\tmu\in L^{\frac{p}{p-q}}(\omega).$

(ii) $\Rightarrow$ (iii).
If $z\in\Gamma(\xi),$ then $|\xi-z|\leq 2(1-|z|).$
We obtain $\frac{1}{1-|z|}\leq \frac{4(1-|z|^2)}{|\xi-z|^2}$ and
\begin{align*}
\hat{\mu}_{\omega}(\xi)
\preceq \frac{1}{\omega(\xi)}\int_{\Gamma(\xi)}\frac{1-|z|^2}{|\xi-z|^2}d\mu(z)
=\tmu(\xi).
\end{align*}

(iii) $\Rightarrow$ (i). Notice that $\frac{p}{q}>1,$
$NP$ is bounded on $L^{\frac{p}{q}}(\omega).$ For $h\in L^{\frac{p}{q}}(\omega)$ and $h\geq 0,$
Fubini's theorem and H\"{o}lder's inequality yield
\begin{align*}
\int_{\PD}Ph(z)d\mu(z)&\asymp\int_{\PD}\int_{\Gamma(\xi)}Ph(z)\frac{1}{1-|z|}d\mu(z)|d\xi|\\
&\leq\int_{\PD}NP(h)(\xi)\int_{\Gamma(\xi)}\frac{1}{1-|z|}d\mu(z)|d\xi|\\
&\leq\|NP(h)\|_{L^{\frac{p}{q}}(\omega)}\|\hmu\|_{L^{\frac{p}{p-q}}(\omega)}\\
&\leq C\|h\|_{L^{\frac{p}{q}}(\omega)}\|\hmu\|_{L^{\frac{p}{p-q}}(\omega)}.\\
\end{align*}
This proves (\ref{eq2.15}).
\end{proof}

\begin{proof}[Proof of Theorem \ref{c}]
First suppose that (\ref{e2}) holds. For the case $0<q\leq 1,$ H\"{o}lder's inequality yields
\begin{align*}
\|\A f\|^q_{L^q(\omega)}
&\preceq \int_{\PD}(Nf(\xi))^{(1-q)p}\lf(\int_{\Gamma(\xi)}|f(z)|^{1-(1-q)\frac{p}{q}}\frac{1}{\nu(T(z))}d\mu(z)\rt)^q\omega(\xi)|d\xi|\\
&\preceq\lf(\int_{\PD}(Nf(\xi))^{p}\omega(\xi)|d\xi|\rt)^{1-q} \lf(\int_{\PD}\int_{\Gamma(\xi)}|f(z)|^{1+p-\frac{p}{q}}\frac{1}{\nu(T(z))}d\mu(z)\omega(\xi)|d\xi|\rt)^q.\\
\end{align*}
Using Theorem \ref{a} and Fubini's theorem,  we have
\begin{align*}
&\int_{\PD}\int_{\Gamma(\xi)}|f(z)|^{1+p-\frac{p}{q}}\frac{1}{\nu(T(z))}d\mu(z)\omega(\xi)|d\xi|\\
&=\int_{\DD}|f(z)|^{1+p-\frac{p}{q}}\frac{1}{\omega(T(z))}d\mv(z)\int_{T(z)}\omega(\xi)|d\xi|\\
&\preceq \|f\|^{p(1+\frac{1}{p}-\frac{1}{q})}_{\H^p(\omega)}.
\end{align*}
In addition,
\[\int_{\PD}(Nf(\xi))^{p}\omega(\xi)|d\xi|\preceq \|f\|^p_{\H^p(\omega)}.\]
So
\[\|\A f\|_{L^q(\omega)}\preceq \|f\|_{\H^p(\omega)}.\]
For the case $q>1,$ by duality argument, we only need to show  for all
$g\in L^{q'}(\omega)$ and $g\geq 0,$ the following inequality holds:
\begin{align}\label{e3}
\int_{\PD}\A f(\xi)g(\xi)\omega(\xi)|d\xi|\preceq \|f\|_{\H^p(\omega)}\|g\|_{L^{q'}(\omega)}.
\end{align}
By Fubuni's theorem, we have
\begin{align*}
\int_{\PD}\A f(\xi)g(\xi)\omega(\xi)|d\xi|
&=\int_{\PD}g(\xi)\int_{\Gamma(\xi)}|f(z)|\frac{1}{\omega(T(z))}d\mv(z)\omega(\xi)|d\xi|\\
&=\int_{\DD}|f(z)|\frac{1}{\omega(T(z))}\int_{T(z)}g(\xi)\omega(\xi)|d\xi|d\mv(z)\\
&\preceq\int_{\DD}|f(z)|\M_{\omega}g(z)d\mv(z).
\end{align*}
Since $\mv$ satisfies (\ref{e2}), so
 $\M_{\omega}:L^{q'}(\omega)\rightarrow L^{q'(1+\frac{1}{p}-\frac{1}{q})}(\mv)$
is bounded by Corollary \ref{coro1}.
It follows from H\"{o}lder's inequality, Theorem \ref{a} and the boundedness of $\M_{\omega}$ that
\begin{align*}
\int_{\DD}|f(z)|\M_{\omega}g(z)d\mv(z)
\preceq \|f\|_{\H^p(\omega)}\|g\|_{L^{q'}(\omega)}.
\end{align*}
The (\ref{e3}) is proved.

Conversely, suppose that $\A: \H^p(\omega)\rightarrow L^q(\omega)$ is bounded. Firstly, we deal with the case $q\geq 1$.
Fixed $a=|a|e^{i\varphi}\in\DD,$ let $I=I_a,$ where
\[I_a=\lf\{e^{it}\in\PD:\varphi-\frac{1-|a|}{2}\leq t\leq \varphi+\frac{1-|a|}{2}\rt\}.\]
Then $|I|=|I_a|\asymp 1-|a|.$
Define the sets $I_k$ as in the proof of Theorem \ref{a} and set $g(\xi)=\frac{1}{1-\bar{a}\xi},$ $\xi\in\PD.$
By (\ref{eq2.1}), (\ref{eq2.5}) and (\ref{eq2.6}), with a constant $N>0$ to be determined later, we obtain
\begin{align*}
&\int_{\PD}\frac{1}{|1-\bar{a}\xi|^{Np+1}}\omega(\xi)|d\xi|\\
&\preceq \frac{1}{(1-|a|)^{Np+1}}\int_{I_0}\omega(\xi)|d\xi|
+\sum_{k=0}^{m}\lf(\frac{3}{2^{k-1}|I|}\rt)^{Np+1}\int_{I_{k+1}\setminus I_{k}}\omega(\xi)|d\xi|\\
&\preceq \frac{1}{|I|^{Np+1}}\int_{I}\omega(\xi)|d\xi|
+\frac{3^{Np}}{|I|^{Np+1}}\sum_{k=0}^{m}\lf(\frac{1}{2^{Np+1}}\rt)^{k-1}M^k\int_{I}\omega(\xi)|d\xi|\\
&\leq \frac{1}{|I|^{Np+1}}\int_{I}\omega(\xi)|d\xi|
+\frac{3^{Np}M}{|I|^{Np+1}}\sum_{k=0}^{\infty}\lf(\frac{M}{2^{Np+1}}\rt)^{k-1}\int_{I}\omega(\xi)|d\xi|.\\
\end{align*}
Choosing $N$ large enough so that $\frac{M}{2^{Np+1}}<1,$ it follows that
\begin{align}\label{eq2.16}
\int_{\PD}\frac{1}{|1-\bar{a}\xi|^{Np+1}}\omega(\xi)|d\xi|\preceq \frac{1}{|I|^{Np+1}}\int_{I}\omega(\xi)|d\xi|.
\end{align}
Since $|I|=|I_a|\asymp 1-|a|,$
\begin{align*}
\int_{\PD}\frac{(1-|a|)^{Np}}{|1-\bar{a}\xi|^{Np+1}}\omega(\xi)|d\xi|\preceq \frac{1}{|I_a|}\int_{I_a}\omega(\xi)|d\xi|.
\end{align*}
Combining this with a similar argument in  \cite{Dur} (page 157) shows that
\begin{align*}
\int_{\PD}\frac{(1-|a|)^{Np}}{|1-\bar{a}\xi|^{Np+1}}\omega(\xi)|d\xi|\asymp \frac{1}{|I_a|}\int_{I_a}\omega(\xi)|d\xi|.
\end{align*}
Let $f_a(z)=\frac{(1-|a|)^N}{(1-\bar{a}z)^{N+\frac{1}{p}}},$ then
\[\|f_a\|^p_{\H^p(\omega)}\asymp \frac{1}{1-|a|}\omega(I_a).\]
Denote $\Lambda(I)=\{z\in\DD: \overline{I_z}\subseteq I\}.$ It is standard that $|1-\bar{a}z|\asymp 1-|a|,$ $\forall z\in \Lambda(I).$
We see that
\begin{align*}
&\int_{I_a}\lf(\int_{\Gamma(\xi)\bigcap\Lambda(I_a)}\frac{(1-|a|)^N}{|1-\bar{a}z|^{N+\frac{1}{p}}}\cdot\frac{1}{\nu(T(z))}d\mu(z)\rt)^q\omega(\xi)|d\xi|\\
&\asymp \int_{I_a}\lf(\int_{\Gamma(\xi)\bigcap\Lambda(I_a)}\frac{1}{(1-|a|)^{\frac{1}{p}}}\cdot\frac{1}{\nu(T(z))}d\mu(z)\rt)^q\omega(\xi)|d\xi|\\
&\asymp|I_a|^{-\frac{q}{p}}\int_{I_a}\lf(\int_{\Gamma(\xi)\bigcap\Lambda(I_a)}\frac{1}{\nu(T(z))}d\mu(z)\rt)^q\omega(\xi)|d\xi|.\\
\end{align*}
In addition,
\begin{align*}
&\int_{I_a}\lf(\int_{\Gamma(\xi)\bigcap\Lambda(I_a)}\frac{(1-|a|)^N}{|1-\bar{a}z|^{N+\frac{1}{p}}}\cdot\frac{1}{\nu(T(z))}d\mu(z)\rt)^q\omega(\xi)|d\xi|\\
&\leq \int_{\PD}\lf(\int_{\Gamma(\xi)}|f_a(z)|\frac{1}{\nu(T(z))}d\mu(z)\rt)^q\omega(\xi)|d\xi|\\
&=\|\A f_a\|^q_{L^q(\omega)}\\
&\preceq (\omega(I_a))^{\frac{q}{p}}|I_a|^{-\frac{q}{p}}.
\end{align*}
Hence
\[\frac{1}{\omega(I_a)}\int_{I_a}\lf(\int_{\Gamma(\xi)\bigcap\Lambda(I_a)}\frac{1}{\nu(T(z))}d\mu(z)\rt)^q\omega(\xi)|d\xi|
\preceq (\omega(I_a))^{\frac{q}{p}-1}.\]
Since $q\geq 1,$ by Jensen's inequality,
\[\int_{I_a}\int_{\Gamma(\xi)\bigcap\Lambda(I_a)}\frac{1}{\nu(T(z))}d\mu(z)\omega(\xi)|d\xi|\preceq (\omega(I_a))^{1+\frac{1}{p}-\frac{1}{q}}.\]
Because $a\in\DD$ is arbitrary, we deduce that for all arcs $I\subseteq\PD,$
\[\int_{I}\int_{\Gamma(\xi)\bigcap\Lambda(I)}\frac{1}{\nu(T(z))}d\mu(z)\omega(\xi)|d\xi|\preceq (\omega(I))^{1+\frac{1}{p}-\frac{1}{q}}.\]
Thus, $\mv(\Lambda(I))\preceq (\omega(I))^{1+\frac{1}{p}-\frac{1}{q}}.$
This implies that $\mv$ satisfies (\ref{e2}) because there exist Carleson squares $S(I_1)$ and $S(I_2)$ such that
$S(I_1)\subseteq \Lambda(I)\subseteq S(I_2)$ and $|I_1|\asymp |I|\asymp |I_2|.$

Now we prove the case $0<q<1.$ For an arc $I\subseteq\PD,$ set
\[R(I)=\{z=|z|e^{i\theta}\in\DD:\omega(T(z))\leq 2^l\omega(I),e^{i\theta}\in I\}.\]
By a similar argument, we see that
\[\frac{1}{\omega(I)}\int_{I}\lf(\int_{\Gamma(\xi)\bigcap R(I)}\frac{1}{\omega(T(z))}d\mv(z)\rt)^q\omega(\xi)|d\xi|\preceq(\omega(I))^{\frac{q}{p}-1}.\]
So
\begin{align}\label{eq2.17}
\frac{1}{\omega(I)}\int_{I}
\lf(\int_{\Gamma(\xi)\bigcap R(I)}\frac{1}{(\omega(I))^{\frac{1}{p}-\frac{1}{q}}}\frac{1}{\omega(T(z))}d\mv(z)\rt)^q\omega(\xi)|d\xi|\leq C.
\end{align}
Fixed $l$ large enough so that for each $\xi\in I,$
\[\hat{R}(I)=\{z\in R(I):2^{l-1}\omega(I)\leq\omega(T(z))\leq 2^l\omega(I)\}\]
is a subset of $\Gamma(\xi).$ It is clear that
\[\bigcup_{\xi\in Q\in\mathcal{D}(I)}\hat{R}(Q)\subseteq R(I)\cap\Gamma(\xi),\]
where $\mathcal{D}(I)$ is the set of all dichotomy arcs of $I.$ Therefore (\ref{eq2.17}) implies
\begin{align}\label{eq2.18}
\frac{1}{\omega(I)}\int_{I}\lf(\frac{1}{(\omega(I))^{\frac{1}{p}-\frac{1}{q}}}
\sum_{Q\in\mathcal{D}(I)}\frac{\mv(\hat{R}(Q))}{\omega(Q)}\chi_{Q}(\xi)\rt)^q\omega(\xi)|d\xi|\leq C.
\end{align}
Set
\[F_{I}(\xi)=\frac{1}{C^{\frac{1}{q}}(\omega(I))^{\frac{1}{p}-\frac{1}{q}}}
\sum_{Q\in\mathcal{D}(I)}\frac{\mv(\hat{R}(Q))}{\omega(Q)}\chi_{Q}(\xi),~~~\xi\in I.\]
Then (\ref{eq2.18}) is equivalent to
\begin{align}\label{eq2.19}
\frac{1}{\omega(I)}\int_{I}(F_I(\xi))^q\omega(\xi)|d\xi|\leq 1,~~~\forall I\subseteq\PD.
\end{align}
If we have the John-Nirenberg type estimate
\begin{align}\label{eq2.20}
\omega\lf(\{\xi\in I: F_{I}(\xi)>t\}\rt)\leq Ce^{-\lambda t^q}\omega(I).
\end{align}
From (\ref{eq2.20}), it is not difficult to see that
\begin{align*}
\omega\lf(\lf\{\xi\in I:\sum_{Q\in\mathcal{D}(I)}\frac{\mv(\hat{R}(Q))}{\omega(Q)}\chi_{Q}(\xi)>t\rt\}\rt)
\leq \exp\lf\{-\lambda(\omega(I))^{1-\frac{q}{p}}t^q\rt\}\omega(I).
\end{align*}
Thus,
\begin{align*}
\mv(\Lambda(I))&\leq \mv(R(I))\\
&=\sum_{Q\in\mathcal{D}(I)}\mv(\hat{R}(Q))\\
&=\int_{I}\lf(\sum_{Q\in\mathcal{D}(I)}\frac{\mv(\hat{R}(Q))}{\omega(Q)}\chi_{Q}(\xi)\rt)\omega(\xi)|d\xi|\\
&=\int_0^{\infty}\omega\lf(\lf\{\xi\in I:\sum_{Q\in\mathcal{D}(I)}\frac{\mv(\hat{R}(Q))}{\omega(Q)}\chi_{Q}(\xi)>t\rt\}\rt)dt\\
&\leq C(\omega(I))^{1+\frac{1}{p}-\frac{1}{q}}.
\end{align*}
Hence,
\[\mv(S(I))\preceq \lf(\omega(I)\rt)^{1+\frac{1}{p}-\frac{1}{q}}.\]

In the following, we use the method of Calder\'{o}n-Zygmund decomposition to prove (\ref{eq2.20}).
Fixed $\alpha>1,$ proceed the Calder\'{o}n-Zygmund decomposition to the function $F_I$ at hight $\alpha,$ there exist $I^1_j\in\mathcal{D}(I),$
$j=1,2,\cdots,$ such that

(I) $|F_I(\xi)|^q\leq \alpha, \xi\in I\setminus\bigcup_jI_j^1;$

(II) $\alpha\leq\frac{1}{\omega(I_j^1)}\int_{I_j^1}(F_I(\xi))^q\omega(\xi)|d\xi|\leq 2\alpha,~~~j=1,2,\cdots;$

(III) $\omega(\bigcup_jI_j^1)=\sum_j\omega(I_j^1)\leq \frac{1}{\alpha}\int_{I}(F_I(\xi))^q\omega(\xi)|d\xi|\leq\frac{1}{\alpha}\omega(I).$

Let $E^1=\cup_jI_j^1,$ then
\[\lf\{\xi\in I:(F_I(\xi))^q>\alpha\rt\}\subseteq E^1\]
and
\[\frac{1}{\omega(I_j^1)}\int_{I_j^1}\lf(F_{I_j^1}(\xi)\rt)^q\omega(\xi)|d\xi|\leq 1,~~~j=1,2,\cdots.\]
Proceed the decomposition to the functions $F^q_{I_j^1},$ we get $I^{2,j}_l\in\mathcal{D}(I_j^1),$ $l=1,2,\cdots,$ satisfying

(I) $|F_{I_j^1}(\xi)|^q\leq \alpha, \xi\in I_j^1\setminus\bigcup_lI_l^{2,j};$

(II) $\alpha\leq\frac{1}{\omega(I_l^{2,j})}\int_{I_l^{2,j}}(F_{I_j^1}(\xi))^q\omega(\xi)|d\xi|\leq 2\alpha,~~~l=1,2,\cdots;$

(III) $\omega(\bigcup_lI_l^{2,j})=\sum_j\omega(I_l^{2,j})\leq \frac{1}{\alpha}\int_{I_j^1}(F_{I_j^1}(\xi))^q\omega(\xi)|d\xi|\leq\frac{1}{\alpha}\omega(I_j^1).$

Denote $E_j^2=\bigcup_lI_l^{2,j}.$ Since $0<p\leq q<1,$  $\lf(\frac{\omega(I_j^1)}{\omega(I)}\rt)^{\frac{1}{p}-\frac{1}{q}}\leq 1.$
If $\xi\in I_j^1\setminus E_j^2,$ then
\begin{align*}
F_{I}(\xi)&=\lf(\frac{\omega(I_j^1)}{\omega(I)}\rt)^{\frac{1}{p}-\frac{1}{q}}F_{I_j^1}(\xi)+F_{I}(\xi)-
\lf(\frac{\omega(I_j^1)}{\omega(I)}\rt)^{\frac{1}{p}-\frac{1}{q}}F_{I_j^1}(\xi)\\
&\leq (\alpha)^{\frac{1}{q}}+F_{I}(\xi)-\lf(\frac{\omega(I_j^1)}{\omega(I)}\rt)^{\frac{1}{p}-\frac{1}{q}}F_{I_j^1}(\xi).\\
\end{align*}
Since $Q\in\mathcal{D}(I)\setminus\mathcal{D}(I_j^1),$ we have $I_j^1\subseteq Q$ and $\chi_Q(\xi)=1.$ So
\begin{align*}
&F_{I}(\xi)-\lf(\frac{\omega(I_j^1)}{\omega(I)}\rt)^{\frac{1}{p}-\frac{1}{q}}F_{I_j^1}(\xi)\\
&=\lf(\frac{1}{\omega(I)}\rt)^{\frac{1}{p}-\frac{1}{q}}\frac{1}{C^{\frac{1}{q}}}
\lf(\sum_{Q\in\mathcal{D}(I)}\frac{\mv(\hat{R}(Q))}{\omega(Q)}\chi_{Q}(\xi)-\sum_{S\in\mathcal{D}(I_j^1)}\frac{\mv(\hat{R}(S))}{\omega(S)}\chi_{S}(\xi)\rt)\\
&=\lf(\frac{1}{\omega(I)}\rt)^{\frac{1}{p}-\frac{1}{q}}\frac{1}{C^{\frac{1}{q}}}
\sum_{Q\in\mathcal{D}(I)\setminus\mathcal{D}(I_j^1)}\frac{\mv(\hat{R}(Q))}{\omega(Q)}\chi_{Q}(\xi)\\
&=\lf(\frac{1}{\omega(I)}\rt)^{\frac{1}{p}-\frac{1}{q}}\frac{1}{C^{\frac{1}{q}}}
\sum_{Q\in\mathcal{D}(I)\setminus\mathcal{D}(I_j^1)}\frac{\mv(\hat{R}(Q))}{\omega(Q)}.
\end{align*}
This shows that $F_{I}(\xi)-\lf(\frac{\omega(I_j^1)}{\omega(I)}\rt)^{\frac{1}{p}-\frac{1}{q}}F_{I_j^1}(\xi)$ is a non-negative constant on $I_j^1,$
which implies
\begin{align*}
&F_{I}(\xi)-\lf(\frac{\omega(I_j^1)}{\omega(I)}\rt)^{\frac{1}{p}-\frac{1}{q}}F_{I_j^1}(\xi)\\
&=\lf(\frac{1}{\omega(I_j^1)}\int_{I_j^1}\lf(F_{I}(\eta)-\lf(\frac{\omega(I_j^1)}{\omega(I)}\rt)^{\frac{1}{p}-\frac{1}{q}}F_{I_j^1}(\eta)\rt)^q
\omega(\eta)|d\eta|\rt)^{\frac{1}{q}}\\
&\leq\lf(\frac{1}{\omega(I_j^1)}\int_{I_j^1}\lf(F_{I}(\eta)\rt)^q\omega(\eta)|d\eta|\rt)^{\frac{1}{q}}\\
&\leq (2\alpha)^{\frac{1}{q}}.
\end{align*}
Now, for all $\xi \in I_j^1\setminus E_j^2,$ $j=1,2,\cdots,$ we have
\[F_I(\xi)\leq (\alpha)^{\frac{1}{q}}+F_{I}(\xi)-\lf(\frac{\omega(I_j^1)}{\omega(I)}\rt)^{\frac{1}{p}-\frac{1}{q}}F_{I_j^1}(\xi)
\leq (\alpha)^{\frac{1}{q}}+(2\alpha)^{\frac{1}{q}}\leq (3\alpha)^{\frac{1}{q}}.\]
Let $E^2=\bigcup_j E_j^2.$ Then  $\lf\{\xi\in I: (F_I(\xi))^q>3\alpha\rt\}\subseteq E^2$ and
\[\omega(E^2)=\sum_j(\omega(E_j^2))\leq \sum_j\frac{1}{\alpha}\omega(I_j^1)\leq \frac{1}{\alpha^2}\omega(I).\]
Repeat the process above, we obtain a sequence $\{E^n\}$ such that
\[\omega(E^n)\leq \frac{1}{\alpha^n}\omega(I)\]
and
\begin{align}\label{eq2.21}
\lf\{\xi\in I:(F_I(\xi))^q>3(n-1)\alpha\rt\}\subseteq E^n.
\end{align}
From (\ref{eq2.21}),
\begin{align}\label{eq2.22}
\omega(\{\xi\in I:(F_I(\xi))^q>\alpha\})\leq Ce^{-\lambda \alpha}\omega(I).
\end{align}
This easily implies the John-Nirenberg type estimate (\ref{eq2.20}). This proof is complete.
\end{proof}

\begin{proof}[Proof of Theorem \ref{d}]
Suppose that $\vv\in L^{\frac{pq}{p-q}}(\omega).$ For all $f\in\H^p(\omega),$ we have
\begin{align*}
\A f(\xi)\leq Nf(\xi)\vv(\xi).
\end{align*}
Using H\"{o}lder's inequality implies
\begin{align*}
\|\A f\|^q_{L^q(\omega)}&\leq\int_{\PD}(Nf(\xi))^q(\vv(\xi))^q\omega(\xi)|d\xi|\\
&\leq\lf(\int_{\PD}(Nf(\xi))^p\omega(\xi)|d\xi|\rt)^{\frac{q}{p}}\lf(\int_{\PD}(\vv(\xi))^{\frac{pq}{p-q}}\omega(\xi)|d\xi|\rt)^{\frac{p-q}{p}}\\
&\preceq\|f\|^q_{\H^p(\omega)}\|\vv\|^q_{L^{\frac{pq}{p-q}}(\omega)}.
\end{align*}
Hence, $\A$ is bounded from $\H^p(\omega)$ to $L^q(\omega).$

Conversely, for any $f\in \H^{p}(\omega)$ and any $g\in \H^{q'}(\omega),$  Fubini's theorem yields
\begin{align*}
&\int_{\PD}\lf(\int_{\Gamma(\xi)}|f(z)|\frac{1}{\nu(T(z))}d\mu(z)\rt)Ng(\xi)\omega(\xi)|d\xi|\\
&\succeq \int_{\PD}\int_{\Gamma(\xi)}|f(z)||g(z)|\frac{1}{\nu(T(z))}d\mu(z)\omega(\xi)|d\xi|\\
&= \int_{\DD}|f(z)||g(z)|d\mv(z).
\end{align*}
One the other hand,
\begin{align*}
\int_{\PD}\lf(\int_{\Gamma(\xi)}|f(z)|\frac{1}{\nu(T(z))}d\mu(z)\rt)Ng(\xi)\omega(\xi)|d\xi|&=\int_{\PD}\A f(\xi)Ng(\xi)\omega(\xi)|d\xi|\\
&\preceq \|f\|_{\H^p(\omega)}\|g\|_{\H^{q'}(\omega)}.
\end{align*}
Therefore
\[\int_{\DD}|f(z)||g(z)|d\mv(z)\preceq \|f\|_{\H^p(\omega)}\|g\|_{\H^{q'}(\omega)}.\]
Since $f\in\H^p(\omega)$ and $g\in\H^{q'}(\omega)$ are arbitrary, by Lemma \ref{lmm5}, for each $f\in\H^t(\omega),$
\[\int_{\DD}|f(z)|d\mv(z)\leq \|f\|_{\H^t(\omega)},\]
here $t=\frac{pq}{pq-p+q}>1.$
So the function
\begin{align*}
\sigma(\xi)=\frac{1}{\omega(\xi)}\int_{\Gamma(\xi)}\frac{1}{1-|z|}d\mv(z)
=\frac{1}{\omega(\xi)}\int_{\Gamma(\xi)}\frac{1}{1-|z|}\frac{\omega(T(z))}{\nu(T(z))}d\mu(z)
\end{align*}
belongs to $L^{\frac{t}{t-1}}(\omega)=L^{\frac{pq}{p-q}}(\omega)$ by Theorem \ref{b}.
Since $\omega\in A_1$ and $1-|z|\asymp |T(z)|$ whenever $z\in\Gamma(\xi).$ Thus,
\begin{align*}
\sigma(\xi)&=\frac{1}{\omega(\xi)}\int_{\Gamma(\xi)}\frac{1}{1-|z|}\frac{\omega(T(z))}{\nu(T(z))}d\mu(z)\\
&\asymp \frac{1}{\omega(\xi)}\int_{\Gamma(\xi)}\frac{\omega(T(z))}{|T(z)|}\frac{1}{\nu(T(z))}d\mu(z)\\
&\asymp\vv(\xi).
\end{align*}
These imply that $\vv\in L^{\frac{pq}{p-q}}(\omega).$ The proof is complete.
\end{proof}

\end{document}